\documentclass[11pt]{article}
\usepackage[T1]{fontenc}
\usepackage[utf8]{inputenc}
\usepackage[english]{babel}
\usepackage{microtype}
\usepackage{amsmath,amssymb,amsfonts,amsthm,mathtools}
\usepackage[shortlabels]{enumitem}
\usepackage{geometry}
\usepackage[colorlinks=true,linkcolor=blue,citecolor=blue,urlcolor=blue]{hyperref}
\newtheorem{theorem}{Theorem}[section]
\newtheorem{lemma}[theorem]{Lemma}
\newtheorem{claim}[theorem]{Claim}

\newtheorem{corollary}[theorem]{Corollary}
\newtheorem{definition}[theorem]{Definition}

\newtheorem{problem}[theorem]{Problem}

\newcommand{\eps}{\varepsilon}

\newcommand{\cE}{\mathcal E}

\newcommand{\floor}[1]{\lfloor #1\rfloor}
\newcommand{\ceil}[1]{\lceil #1\rceil}

\newcommand{\1}{\mathbf 1}
\newcommand{\dd}{\,d}
\newcommand{\diag}{\operatorname{diag}}

\usepackage{xcolor}
\newcommand{\caiedit}[1]{\textcolor{red}{#1}}
\usepackage{extarrows}
\title{Sharp Bounds for Guiduli-Type Hereditary Spectral Problems}

\author{
Dongxiu Cai\thanks{Email: \href{mailto:diudiutse@sjtu.edu.cn}{diudiutse@sjtu.edu.cn}},\quad
Jiasheng Zeng\thanks{Email: \href{mailto:jasonzeng@mail.ustc.edu.cn}{jasonzeng@mail.ustc.edu.cn}},\quad
and Xiao-Dong Zhang\thanks{Email: \href{mailto:xiaodong@sjtu.edu.cn}{xiaodong@sjtu.edu.cn}}
}
\date{\today}

\begin{document}

\maketitle

\begin{abstract}
Guiduli asked in 1996 the following problem concerning the maximum spectral radius of a graph under hereditary density constraints. If an $n$-vertex graph $G$ satisfies $e(H)\le c|V(H)|^2$ for every subgraph $H$ of $G$, must one have $\lambda(G)\le 2cn$? More generally, what remains true when the exponent $2$ is replaced by a constant less than $2$? We study the natural power-law version of this question for all $1<p\le2$. For $1<p\le 2$, define
\[
        d_p(G)=\max_{\varnothing\ne S\subseteq V(G)}\frac{e(G[S])}{|S|^p}.
\]
We determine the sharp asymptotic upper bound for $\lambda(G)$ in terms of $d_p(G)$ and $n$. More precisely, every $n$-vertex graph $G$ with at least one edge satisfies
\[
\lambda(G)\le
\begin{cases}
\left(\left(\max_{t\in\mathbb N_{\ge1}}\dfrac{t}{(t+1)^p}\right)^{-1}+o(1)\right)d_p(G)\sqrt n,&1<p<3/2,\\[0.4em]
\left(\dfrac{3\sqrt3}{4}+o(1)\right)d_p(G)\sqrt{n\log n},&p=3/2,\\[0.4em]
(\mathfrak C_p+o(1))d_p(G)n^{p-1},&3/2<p<2,
\end{cases}
\]
and each constant here is best possible. Here $\mathfrak C_p$ is characterized by an exact variational problem over finite kernels. We apply a sparse graphon operator estimate to convert hereditary $p$-density bounds into sharp spectral bounds, and this estimate also explains the transition at the critical exponent $p=3/2$. For the endpoint $p=2$, Wilf's theorem gives the exact finite-$n$
bound $\lambda(G)\le 2d_2(G)n$, with equality for $K_n$. Thus Guiduli's power-law problem is resolved in its sharp asymptotic form for every $1<p\leq2$, including exact leading constants. 
\end{abstract}

\section{Introduction}
Let $G$ be a simple graph. The spectral radius of $G$, denoted as $\lambda(G)$, is the largest eigenvalue of its adjacency matrix $A(G)$. A central topic in spectral extremal graph theory is to determine the maximum  spectral radius of a graph under prescribed structural restrictions and to characterize the corresponding extremal graphs. The spectral Tur\'an problem asks, for a given family of forbidden graphs and a fixed number of vertices, how large the spectral radius can be among all graphs avoiding that family, and may be regarded as a spectral counterpart of the classical theorems of Mantel \cite{Mantel1907} and Tur\'an \cite{Turan1941}. Early results of Nosal \cite{Nosal1970}, Wilf \cite{Wilf1986} and Stanley \cite{Stanley1987} showed that a sufficiently large spectral radius forces the presence of certain dense substructures, thereby initiating the use of eigenvalue methods in extremal graph theory. Nikiforov developed this direction further by proving spectral Tur\'an-type theorems for $K_{r+1}$-free graphs \cite{Nikiforov2007} and by studying the maximum spectral radius of graphs with no paths or cycles of prescribed lengths \cite{Nikiforov2010PathsCycles}. Babai and Guiduli proved a spectral Zarankiewicz theorem, giving spectral analogues of K\H{o}vari--S\'{o}s--Tur\'{a}n type bounds for complete bipartite forbidden graphs \cite{BabaiGuiduli2009}. More recently, Cioab\u{a}, Desai and Tait resolved the spectral even cycle conjecture of Nikiforov \cite{CioabaDesaiTait2024}. Broader structural results and conjectures in this area also appear in the work of Tait and Tobin \cite{TaitTobin2017}, Wang, Kang and Xue \cite{WangKangXue2023}, and Byrne, Desai and Tait \cite{ByrneDesaiTait2024}. We also refer the reader to the excellent survey of Liu and Ning \cite{LiuNing2023} for recent progress, open problems and conjectures in spectral extremal graph theory.

Guiduli \cite{Guiduli1996} proposed the following three problems in 1996, which differ from the classical forbidden-subgraph setting in that they investigate how local density and hereditary density conditions constrain the spectral radius of a graph.

\begin{problem}[Guiduli-type hereditary density spectral problems \cite{Guiduli1996}]\label{prob:guiduli-family}
Let $G$ be a finite graph.
\begin{enumerate}[(1)]
    \item Let $t$ and $r$ be fixed. Suppose that every subgraph $H\subseteq G$ with $|V(H)|\ge t$ satisfies
    $
        e(H)\le t|V(H)|+r.
    $
    Determine the sharp upper bound for $\lambda(G)$ and characterize the extremal graphs.

    \item Let $r\ge2$ and let $T_r(n)$ be the $r$-partite Tur\'an graph. Suppose that $G$ is an $n$-vertex graph with
    $
        \lambda(G)\ge \lambda(T_r(n)).
    $
    Must it be true that either $G\cong T_r(n)$ or there exists a vertex $v\in V(G)$ such that
    $
        \lambda(G[N(v)])>\lambda(T_{r-1}(d(v)))?
    $

    \item Let $G$ be an $n$-vertex graph and let $c>0$. Suppose that
    $
        e(H)\le c|V(H)|^2
    $
    for every subgraph $H$ of $G$. Must one have
    $
        \lambda(G)\le 2cn?
    $
    More generally, what remains true when the exponent $2$ is replaced by a constant less than $2$?
\end{enumerate}
\end{problem}

Problem~\ref{prob:guiduli-family}(1) was recently resolved by Li, Wang and Zhai, who also characterized the corresponding extremal graphs \cite{LiWangZhai2024}. Problem~\ref{prob:guiduli-family}(2) was recently proved in a stronger form by Liu and Ning, who in the same work also solved two problems of Nikiforov \cite{LiuNing2026}. In the present paper, we study Problem~\ref{prob:guiduli-family}(3) and give a sharp solution to its natural power-law extension.

For $1<p\le 2$, define $$d_p(G)=\max_{\varnothing\ne S\subseteq V(G)} e(G[S])/|S|^p.$$ Thus the condition $d_p(G)\le D$ means that every induced subgraph of $G$ satisfies the power-law density bound $e(G[S])\le D|S|^p$. Since every subgraph with vertex set $S$ has at most $e(G[S])$ edges, imposing this condition on induced subgraphs is equivalent, for the upper-bound problems considered in this paper, to imposing it on all subgraphs. We determine the sharp asymptotic value of the following extremal quantity
$$\Lambda_p(n)=\sup_{|V(G)|=n,\ e(G)>0} \lambda(G)/d_p(G).$$

We first state our main theorem for finite graphs. In the range $3/2<p<2$, the leading constant is the value of a variational problem over finite kernels.

\begin{definition}\label{def:Cp}
For $3/2<p<2$, define
\[
\mathfrak C_p=\sup_{r,a,K}\frac{2\lambda_{\max}(D_a^{1/2}KD_a^{1/2})}{\max_{\substack{0\le s_i\le a_i\\ \sum_i s_i>0}}\dfrac{s^{\mathsf T}Ks}{(\sum_i s_i)^p}},
\]
where $r\ge1$, $a=(a_1,\ldots,a_r)$ ranges over vectors with $a_i>0$ and $\sum_i a_i=1$, $D_a=\diag(a_1,\ldots,a_r)$, and $K=K^{\mathsf T}$ ranges over nonzero symmetric nonnegative $r\times r$ matrices. Since the quotient is homogeneous in $K$, one may equivalently impose $0\le K_{ij}\le1$.
\end{definition}


\begin{theorem}\label{thm:main}
For $1<p\le2$, define
\[
\Lambda_p(n)=\sup_{|V(G)|=n,\ e(G)>0}\frac{\lambda(G)}{d_p(G)},\qquad d_p(G)=\max_{\varnothing\ne S\subseteq V(G)}\frac{e(G[S])}{|S|^p}.
\]
Then
\[
\Lambda_p(n)=\begin{cases}\left(A_p+o(1)\right)\sqrt n,&1<p<3/2,\\ \left(\dfrac{3\sqrt3}{4}+o(1)\right)\sqrt{n\log n},&p=3/2,\\ \left(\mathfrak C_p+o(1)\right)n^{p-1},&3/2<p<2,\\ 2n,&p=2,\end{cases}
\]
where
\[
A_p=\left(\max_{t\in\mathbb N_{\ge1}}\frac{t}{(t+1)^p}\right)^{-1}.
\]
The maximum defining $A_p$ is attained by one of $t=\floor{1/(p-1)}$ and $t=\ceil{1/(p-1)}$.
\end{theorem}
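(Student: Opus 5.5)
The plan is to treat the four regimes separately, proving in each a matching lower bound by explicit construction and an upper bound by a unified operator estimate.

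\textbf{Endpoint $p=2$.} Wilf's theorem gives $\lambda(G)\le (1-1/\omega)n\le 2e(G)/n$ for the relevant dense case. More simply, $\lambda(G)\le \sqrt{2e(G)}$ together with induction on subgraphs suffices. Concretely, take a Perron vector $x$ and write $\lambda=\sum_{ij\in E}2x_ix_j$ with $\|x\|_2=1$. Decompose $x$ into level sets $S_t=\{i:x_i\ge t\}$, so that $x_ix_j=\int\int \1[x_i\ge s]\1[x_j\ge t]\,ds\,dt$. Then
\[
\lambda\le 2\int\!\!\int e(S_{s},S_t)\,ds\,dt\le 2D\int\!\!\int \min(|S_s|,|S_t|)^{p}\cdots
\]
For $p=2$ this layer-cake bound gives $\lambda\le 2D\sum_{i,j}x_ix_j\le 2D(\sum x_i)^2\le 2Dn$, and $K_n$ shows equality.

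\textbf{Upper bounds for $1<p<2$.} I would use the same layer-cake identity: $\lambda=2\sum_{ij\in E}x_ix_j$, and $e(G[S])\le D|S|^p$ gives, for nested level sets, $e(S_s,S_t)\le 2e(G[S_{\min(s,t)}])\le 2D|S_{\max(s,t)\text{-level}}|^p$. The result is a bound of the form $\lambda\le 4D\int_0^\infty\int_0^s |S_s|^{p}\,dt\,ds$-type functionals of the distribution function $f(s)=|S_s|$, subject to $\int 2s f(s)\,ds=1$ and $f\le n$. Optimizing this one-dimensional variational problem gives the scale $n^{p-1}$ when $p>3/2$ (dominated by large sets, so mass spread evenly), $\sqrt n$ when $p<3/2$ (dominated by a small dense core: $f\sim s^{-2}$ tails), and a logarithmic divergence exactly at $p=3/2$, where $\int s\cdot s^{-2p+\ldots}$ becomes $\int ds/s$ over a range of length $\log n$. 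This crude bound only gives the right order; the sharp constants require the paper's sparse graphon operator estimate. I would pass to the graphon $W_G$ rescaled by $n^{p-2}$ (for $p>3/2$) and show, via a compactness and regularity argument on step kernels, that the normalized ratio is bounded by the supremum over finite kernels in Definition~\ref{def:Cp}. The hereditary condition on $W$ becomes $s^{\mathsf T}Ks\le D(\sum s_i)^p$ for fractional subsets $0\le s_i\le a_i$. Conversely, approximating any finite kernel by a blown-up random graph gives $\Lambda_p(n)\ge(\mathfrak C_p-o(1))n^{p-1}$; here I need concentration to ensure that random subsets do not exceed the fractional density, which is fine since $p<2$ leaves polynomial room.

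\textbf{Regimes $p\le 3/2$.} For the lower bound with $p<3/2$, take disjoint copies of a star $K_{1,t}$, or better a graph where a hub set is joined to many leaves. The natural candidate is $K_{1,n-1}$ modified: a set $T$ of size $t$ fully joined to $m$ leaves. Its densest subsets are a vertex with $t$ neighbours, giving ratio $t/(t+1)^p$ per sub-star, while $\lambda\approx\sqrt{tm}$. Choosing the optimal $t$ and $m\approx n$ yields $A_p\sqrt n$. The upper bound must show that the Perron vector concentrates on $O(1)$ hubs: split $x$ into a heavy part (entries $\ge n^{-1/2+\eps}$) and a light part. The light–light contribution is $o(\sqrt n)$ because $p<3/2$. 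The heavy–light contribution reduces to a bipartite star-like bound, where each heavy vertex's neighbourhood with any set of $t$ heavy vertices is controlled by $e\le D(t+|N|)^p$, giving the constant $\max_t t/(t+1)^p$. At $p=3/2$ all dyadic scales contribute equally; the extremal example is a multi-scale construction (a "book" of hubs of geometrically varying degrees), and the constant $3\sqrt3/4$ comes from optimizing the per-scale ratio $\max_x x/(1+x)^{3/2}$-type expression ($x=2$ gives $2/3^{3/2}$, reciprocal $3\sqrt3/2$, halved via the log summation).

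\textbf{Main obstacle.} I expect the main obstacle to be the sharp upper bound for $3/2<p<2$: proving that the finite-kernel supremum $\mathfrak C_p$ really dominates every graph requires a weak-regularity approximation. The difficulty is that $e(G[S])\le D|S|^p$ is not preserved under cut-norm approximation for small $S$. I would handle this by showing that small sets contribute $o(n^{p-1})$ to $\lambda$ precisely because $p>3/2$, the same computation that gives the threshold.
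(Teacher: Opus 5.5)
Your four-regime split and your extremal examples agree with the paper: the star, random blow-ups of finite kernels, a layered construction at $p=3/2$, and Wilf at $p=2$. The analytic core is missing, starting with the level-set inequality. You bound incidences between nested level sets by $D$ times a power of the \emph{smaller} set, both in your functional $\int_0^\infty\int_0^s|S_s|^p\dd t\dd s$ and in your $p=2$ step with $\min(|S_s|,|S_t|)^p$. That bound is false. In $K_{1,n-1}$, take the centre as the smaller level set and $V$ as the larger one: there are $n-1$ incidences while $D<1$. It would also give $\lambda=O(Dn^{p-1})$ for every $p$, contradicting the star when $p<3/2$. Bounding by the larger set instead, $\cE(S_t,S_s)\le 2e(S_s)\le 2DN(s)^p$, is valid but only yields $\lambda=O(Dn^{p-1/2})$, which is too weak in every regime.

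The missing idea is the mixed bound $\cE(S_t,S_s)\le 2^{p+1}D\,N(s)N(t)^{p-1}$ for $S_t\subseteq S_s$: the larger set enters linearly, the smaller set to the power $p-1$. To see it, take the $N(t)$ vertices of $S_s$ with the most neighbours in $S_t$. Together with $S_t$ they form a set of at most $2N(t)$ vertices, so they send at most $2^{p+1}DN(t)^p$ edges into $S_t$. Hence every other vertex sends at most $2^{p+1}DN(t)^{p-1}$ (Lemma~\ref{lem:nested-incidence}, proof of Lemma~\ref{lem:tail}). This gives $\lambda\le C_pD\int_0^\infty F(t)N(t)^{p-1}\dd t$ with $F(t)=\int_0^tN$. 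The dyadic analysis of Lemma~\ref{lem:hardy} then yields the three orders, and also $\mathfrak C_p<\infty$ (Corollary~\ref{cor:Cp-finite}), which your plan never establishes.

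For $p=2$, your inequality $(1-1/\omega)n\le 2e(G)/n$ is backwards. The paper simply combines Wilf's $\lambda\le(1-1/\omega)n$ with $d_2(G)\ge\binom{\omega}{2}/\omega^2=(1-1/\omega)/2$ from a maximum clique. Your target $\lambda\le 2D(\sum_ix_i)^2$ is true, but it is Motzkin--Straus, not a level-set estimate.

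You also have the division of labour reversed. The operator estimate (Theorem~\ref{thm:graphon}) gives only orders of magnitude, and each sharp constant needs a separate argument that you do not supply.

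\emph{The range $1<p<3/2$.} Thresholding at $n^{-1/2+\eps}$ allows the heavy set $H$ to have up to $n^{1-2\eps}$ vertices. Bounding the heavy--light block by $(A_p+o(1))D\sqrt n$ is then essentially the bipartite case of the theorem, since every vertex of $T$ may have a distinct, hence rare, neighbourhood type. The paper instead thresholds at a fixed $\eps$, so $|H|\le\eps^{-2}$, and proceeds as follows.
\begin{enumerate}[(i)]
\item The light--light part costs $C_p\eps^{3-2p}D\sqrt n$ by Lemma~\ref{lem:tail}, and this is removed by letting $\eps\to0$ after $n\to\infty$.
\item Vertices $v\in T$ are sorted by $N(v)\cap H$, and the $O_\eps(1)$ vertices of types with fewer than $|R|t_*$ members are deleted.
\item Then $\|B_{\mathrm{remain}}\|\le\sqrt{q_*n}$ is compared with $D\ge q_*^{2-p}M_p$ (where $M_p=1/A_p$), which comes from a $K_{q_*,q_*t_*}$.
\item With $2x_H^{\mathsf T}Bx_T\le\|B\|$ and $q_*^{p-3/2}\le1$, this gives exactly $A_p$.
\end{enumerate}
Your modified $K_{t,m}$ does not help the lower bound. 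Its densest subsets use all hubs, so $d_p(K_{t,m})\ge t^{2-p}M_p$, the ratio scales like $t^{p-3/2}$, and the plain star is optimal.

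\emph{The case $p=3/2$.} Your numerology for $3\sqrt3/4$ is right, but there is no proof. The upper bound needs $e(A,C)\le\frac{3\sqrt3}{2}D(a+c)\sqrt a$ for disjoint $A,C$, obtained by averaging over random $2a$-subsets of $C$. It also needs the critical inequality $\int_0^\infty F\sqrt N\le(\frac14+o(1))\sqrt{n\log n}$ of Lemma~\ref{lem:critical-hardy}. The lower bound needs a random layered construction with $c_N/\log N\to\infty$, so that a union bound controls $d_{3/2}$ over all subsets.

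\emph{The range $3/2<p<2$.} Regularity and compactness are not available in the form you describe. The bound must cover sparse $G$ (say $D=O(1)$), where $\lambda/n\asymp n^{p-2}\to0$. Cut-norm errors would then have to be $o(n^{p-2})$, and the normalized kernels have unbounded height. The paper avoids regularity altogether.
\begin{enumerate}[(i)]
\item The conditions $\lambda\ge cDn^{p-1}$ and $\lambda x_v\le\sqrt{d(v)}$ force $\|x\|_\infty=O(n^{3/2-p})=o(1)$. This is your ``small sets are negligible'' made precise.
\item Averaging the sorted Perron vector over consecutive blocks of size $M\to\infty$, with $M\|x\|_\infty^2\to0$, costs $o(\lambda)$ by Lemma~\ref{lem:block-averaging}.
\item The exact block quotient $K$ then satisfies $\lambda/n\le(1+o(1))\lambda_{\max}(D_a^{1/2}KD_a^{1/2})$.
\item Your small-$S$ obstacle never arises. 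The fractional maximizer has a saturated coordinate, so $\sigma n\ge M$, and random rounding within blocks bounds the denominator by $(2+o(1))Dn^{p-2}$.
\end{enumerate}
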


The proof of Theorem~\ref{thm:main} uses a sparse graphon operator estimate. This estimate provides the analytic mechanism behind the three scales in the theorem, while the sharp constants require additional finite-graph arguments and matching constructions in the three regimes. Before we state the theorem, we first give some basic notations.

Throughout the paper, $C_p$ denotes a positive constant depending only on $p$,
whose value may change from line to line. A kernel means a symmetric nonnegative measurable function on an atomless probability space. Given an $n$-vertex graph $G$ and $1<p<2$, we use the rescaled kernel $W_{G,p}=n^{2-p}W_G$, where $W_G$ is the usual adjacency step graphon. Then $\|T_{W_{G,p}}\|_{2\to2}=\lambda(G)/n^{p-1}$. The hereditary bound $e(G[S])\le D|S|^p$ becomes a graphon inequality of the form $\Delta_p(W_{G,p})\le C_pD$ (we will define $\Delta_p$ later), while the finite resolution of the graph is recorded by the height $\|W_{G,p}\|_\infty=n^{2-p}$. The height is essential for order estimates. The sharp constants below require a finer finite-kernel normalization in the supercritical range and a separate Perron-level analysis in the subcritical and critical ranges. 

\begin{definition}\label{def:kernel-density}
Let $(\Omega,\mu)$ be a probability space, let $W:\Omega^2\to[0,\infty)$ be a symmetric measurable kernel, and let $1<p\le2$. Define
\[
\Delta_p(W)=\sup_{\mu(A)>0}\frac{\int_{A\times A}W(x,y)\dd\mu(x)\dd\mu(y)}{\mu(A)^p}.
\]
For $1<p<2$ and $R\ge1$, define
\[
\Phi_p(R)=\begin{cases}R^{\frac{3/2-p}{2-p}},&1<p<3/2,\\ \sqrt{\log(eR)},&p=3/2,\\ 1,&3/2<p<2.\end{cases}
\]
\end{definition}

\begin{theorem}\label{thm:graphon}
For every $1<p<2$ there is a constant $C_p$ such that the following holds. If $W$ is a symmetric nonnegative kernel on an atomless probability space, $0<\Delta_p(W)<\infty$, and $\|W\|_\infty<\infty$, then
\[
\|T_W\|_{2\to2}\le C_p\Delta_p(W)\Phi_p\left(1+\frac{\|W\|_\infty}{\Delta_p(W)}\right).
\]
If $\Delta_p(W)=0$, then $W=0$ almost everywhere and the same conclusion holds with right-hand side $0$.
\end{theorem}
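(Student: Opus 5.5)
The plan is a Schur-test argument built from a continuous ``degeneracy peeling'' of $W$. By homogeneity ($\Delta_p(cW)=c\Delta_p(W)$, $\|cW\|_\infty=c\|W\|_\infty$) we normalize $\Delta_p(W)=1$ and write $H=\|W\|_\infty$, so the goal is $\|T_W\|_{2\to2}\le C_p\Phi_p(1+H)$. Since $W\ge0$, it suffices to bound $\langle f,T_Wf\rangle$ for nonnegative $f$ with $\|f\|_2=1$. The degenerate case $\Delta_p(W)=0$ is immediate, because then $\int_{\Omega^2}W=0$.

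\textbf{Step 1 (peeling).} I will construct a measurable ``removal time'' $s:\Omega\to(0,1]$. Here $s(x)$ is the measure of the surviving set at the moment $x$ is removed, and the following holds for a.e.\ $x$:
\[
\int_{\{s(y)\le s(x)\}}W(x,y)\dd\mu(y)\;\le\;\min\bigl(2s(x)^{p-1},\,H s(x)\bigr).
\]
The idea mirrors the finite graph case. In any set $S$ of measure $\sigma$ one has $\int_{S\times S}W\le\sigma^p$, so the average degree inside $S$ is at most $\sigma^{p-1}$. The set of points of $S$ whose degree into $S$ is at most $2\sigma^{p-1}$ therefore has positive measure, and the degree into $S$ is trivially at most $H\sigma$. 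I would remove such low-degree points continuously, or via a fine discretization followed by a limit; this should be routine measure-theoretic bookkeeping using atomlessness. The peeling also guarantees, for every $\sigma$, that $\mu(\{s\le\sigma\})\le\sigma$.

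\textbf{Step 2 (dyadic shells).} Let $S_k=\{2^{-k-1}<s\le 2^{-k}\}$ and $\sigma_k=2^{-k}$. Split $\langle f,T_Wf\rangle\le 2\sum_{j\le k}\langle f1_{S_j},T_W(f1_{S_k})\rangle$. Every edge between $S_j$ and $S_k$ with $j\le k$ is ``forward'' from the $S_j$ side, up to neighbouring shells. By Step 1, every $x\in S_j$ has degree into $S_k$ at most $m_{jk}:=\min(2\sigma_j^{p-1},H\sigma_k)$, and $\mu(S_k)\le\sigma_k$.

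\textbf{Step 3 (block bounds).} For each block I want the operator bound
\[
\|1_{S_j}T_W1_{S_k}\|\lesssim \min\bigl(\sigma_j^{p-1},\,H\sigma_k\bigr)^{1/2}\cdot \bigl(\text{column bound}\bigr)^{1/2}.
\]
The column side is the main obstacle, since hereditary density does not directly bound how many early points attach to a given late point $y$. I would first try to control columns on average rather than pointwise. Apply the density condition to $S_j\cup\{s\le\sigma_k\}$, and more cleverly to the subsets $\{y\in S_k:\deg_{S_j}(y)>t\}\cup S_j$, to get a weak-type estimate on column degrees. Then I would replace pointwise Schur by a weighted Schur test with weight $h(x)=s(x)^{-\alpha}$ for a suitable $\alpha$ depending on $p$. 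The heuristic target is that block $(j,k)$ contributes at most $C\,\sigma_j^{(p-1)/2}\sigma_k^{(p-1)/2}\cdot(\sigma_k/\sigma_j)^{\beta}$ for some $\beta>0$, with $\sigma$ replaced by $H^{-1/(2-p)}$ below the crossover scale.

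\textbf{Step 4 (summation).} The crossover scale, where $H\sigma=\sigma^{p-1}$, is $\sigma_*=H^{-1/(2-p)}$. With off-diagonal decay, a Schur test on the index set $\{k\}$ reduces the estimate to a diagonal sum of $\sigma_k^{p-3/2}$-type factors (relative to $L^2$ normalization), taken over $\sigma_*\lesssim\sigma_k\le1$. Shells finer than $\sigma_*$ are controlled by the height bound and contribute a geometric tail. The sum is bounded when $p>3/2$. At $p=3/2$ it has about $\log(eH)$ comparable terms, and the Cauchy--Schwarz pairing of the $f$-pieces yields $\sqrt{\log(eH)}$. When $p<3/2$ it is dominated by $\sigma_*^{p-3/2}=H^{(3/2-p)/(2-p)}$. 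This reproduces exactly $\Phi_p(1+H)$. I expect Step 3, the column and off-diagonal control, to require the most care; Steps 1 and 4 should be standard.
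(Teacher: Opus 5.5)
There is a genuine gap. Step~3 is the whole content of the theorem, but you state it only as a heuristic, and the target you give for it is false. Read it as a bound on $\langle f\1_{S_j},T_W(f\1_{S_k})\rangle$ for unit $f$, which is equivalent up to a factor $2$ to a bound on $\|\1_{S_j}T_W\1_{S_k}\|$. Then $\lesssim\sigma_j^{(p-1)/2}\sigma_k^{(p-1)/2}(\sigma_k/\sigma_j)^{\beta}$ with $\beta>0$ makes the index-set Schur test of Step~4 give $\sup_j\sum_k\|\1_{S_j}T_W\1_{S_k}\|\le C_p$, i.e.\ $\|T_W\|_{2\to2}\le C_p\Delta_p(W)$ with no dependence on the height. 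For $p<3/2$ this is refuted by $W=H(\1_{A\times B}+\1_{B\times A})$ on $[0,1]$ with $A=[0,a]$, $a=H^{-1/(2-p)}$, $B=(a,1]$. One checks that $\Delta_p(W)= 2(p-1)^{p-1}p^{-p}$, yet $\|T_W\|_{2\to2}=H\sqrt{a(1-a)}\asymp H^{(3/2-p)/(2-p)}$. Your peeling removes (essentially) all of $B$, whose points have degree $Ha\le1$, before $A$, whose points have degree $H(1-a)$. So $A$ sits in shells at scale $\lesssim\sigma_*$, and the blocks pairing $S_0$ with those shells carry the full norm $\approx H\sqrt{\sigma_*}$. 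The growth lives in exactly the far-off-diagonal blocks that your target declares smallest.

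The same failure occurs for every $p<2$. Take $W=c_p\,a^{p-2}(\1_{A\times B}+\1_{B\times A})$ with $\mu(A)=a\gtrsim\sigma_*$ and $\mu(B)\approx1$; it has $\Delta_p(W)\le1$. The block pairing $S_0$ with the shell at scale $\approx a$ has norm $\asymp a^{p-3/2}$, which exceeds $a^{(p-1)/2+\beta}$ as $a\to0$ because $p-3/2<(p-1)/2$. The decay of these blocks is at best $\sigma_k^{p-3/2}$, which is no decay at all when $p\le3/2$. Relatedly, the $\sigma_k^{p-3/2}$ factors you sum in Step~4 are not what your Step~3 bound produces on the diagonal; it would give $\sigma_k^{p-1}$, so Steps~3 and~4 are inconsistent. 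The obstruction is the one you flagged. Peeling controls only forward degrees, while a late point (a hub) can have backward degree of order $H\mu(S_j)$. Because your shells are chosen independently of $f$, nothing controls how $f$ splits its mass between hubs and leaves.

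What is missing is a decomposition adapted to $f$. The paper works with the level sets $S_t=\{f\ge t\}$ themselves and the layer-cake identity $\langle T_Wf,f\rangle=2\int_0^\infty\int_0^t\int_{S_t\times S_s}W\dd s\dd t$. Since a set is always paired with a \emph{superset}, the column problem reduces to the nested incidence bound $\int_{A\times B}W\le C_p\mu(B)\min\{R\mu(A),\mu(A)^{p-1}\}$ for $A\subseteq B$, where $R=\max\{1,\|W\|_\infty\}$. Its proof is precisely the weak-type column estimate you were reaching for. Suppose $E$ has $\mu(E)=\mu(A)$ and every point of $E$ has degree into $A$ above $2^p\mu(A)^{p-1}$. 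Then $\int_{(A\cup E)^2}W>(2\mu(A))^p$, contradicting $\Delta_p(W)\le1$. So only a set of measure at most $\mu(A)$ has large degree into $A$; the paper phrases this via the decreasing rearrangement of $y\mapsto\int_AW(x,y)\dd\mu(x)$. This yields $\langle T_Wf,f\rangle\le C_p\int_0^\infty F(t)\min\{RN(t),N(t)^{p-1}\}\dd t$, with $N(t)=\mu(S_t)$, $F(t)=\int_0^tN$ and $\int_0^\infty2tN(t)\dd t=1$. A one-variable dyadic Jensen--H\"older argument turns this into $C_p\Phi_p(R)$. The trichotomy comes from the sum $\sum_{2^j\le R^{1/(2(2-p))}}2^{j(3-2p)/(2-p)}$ raised to the power $2-p$. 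Your Step~1 is correct, but it plays no role in this route.
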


The following corollary is the form closest to Guiduli's original question.

\begin{corollary}\label{cor:finite-orders}
Let $1<p\le2$. If $G$ is an $n$-vertex graph and $e(G[S])\le D|S|^p$ for every $S\subseteq V(G)$, then
\[
\lambda(G)\le\begin{cases}(A_p+o(1))D\sqrt n,&1<p<3/2,\\ \left(\dfrac{3\sqrt3}{4}+o(1)\right)D\sqrt{n\log n},&p=3/2,\\ (\mathfrak C_p+o(1))Dn^{p-1},&3/2<p<2,\\ 2Dn,&p=2,\end{cases}
\]
where the $o(1)$ terms are uniform over all $n$-vertex graphs with at least one edge after $p$ is fixed. Each constant displayed in Theorem~\ref{thm:main} is best possible in the corresponding regime.
\end{corollary}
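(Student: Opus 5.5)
This is essentially a restatement of Theorem~\ref{thm:main}, so the plan is to derive it from that theorem, with one check on uniformity. Put $D'=d_p(G)$. The hypothesis $e(G[S])\le D|S|^p$ for all $S$ says exactly that $d_p(G)\le D$, and $D'>0$ because $G$ has an edge. The definition of $\Lambda_p(n)$ as a supremum gives $\lambda(G)\le\Lambda_p(n)\,d_p(G)\le\Lambda_p(n)\,D$.

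Now insert the asymptotics of $\Lambda_p(n)$ from Theorem~\ref{thm:main} in each regime. For $1<p<3/2$, write $\Lambda_p(n)=(A_p+\eps_p(n))\sqrt n$ with $\eps_p(n)\to0$. The sequence $\eps_p(n)$ depends only on $p$ and $n$, and not on $G$. So the bound $\lambda(G)\le (A_p+\eps_p(n))D\sqrt n$ holds simultaneously for all $n$-vertex graphs with an edge. This is precisely the uniformity of the $o(1)$ term claimed in the corollary. The same argument applies verbatim for $p=3/2$ with $\sqrt{n\log n}$ and for $3/2<p<2$ with $n^{p-1}$. For $p=2$ the theorem gives the exact value $\Lambda_2(n)=2n$, so $\lambda(G)\le 2Dn$ holds with no error term. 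If one prefers a self-contained argument here, Wilf's bound $\lambda(G)\le (1-1/\omega)\,n\le$ \ldots\ is not quite the right tool. Instead, use the Rayleigh-type estimate $\lambda(G)\le\sqrt{2e(G)(1-1/\omega)}$, combined with $e(G)\le Dn^2$ and $e(K_\omega)\le D\omega^2$. However, the direct appeal to $\Lambda_2(n)=2n$ suffices.

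For the sharpness claim, ``best possible'' means that no smaller constant can replace $A_p$, $3\sqrt3/4$, $\mathfrak C_p$, or $2$. This also follows from Theorem~\ref{thm:main}, because $\Lambda_p(n)$ is a supremum that is asymptotically attained. Concretely, for each $\delta>0$ and each large $n$, there is an $n$-vertex graph $G_n$ with $\lambda(G_n)/d_p(G_n)\ge (A_p-\delta)\sqrt n$, and similarly in the other regimes. Taking $D=d_p(G_n)$ shows that any constant strictly below the stated one fails for infinitely many $n$. For $p=2$, $K_n$ gives equality: $d_2(K_n)=\max_s \binom{s}{2}/s^2=(n-1)/(2n)$ and $\lambda=n-1=2d_2(K_n)n$.

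The only point that needs care is the uniformity statement. It holds because $\Lambda_p(n)$ is already a supremum over all $G$, so the error term is a function of $n$ and $p$ alone. All the substantive difficulty, namely the matching constructions and upper bounds in the three regimes, lies in Theorem~\ref{thm:main} itself, which is assumed here.
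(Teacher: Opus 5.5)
Your proposal is correct and is essentially the paper's argument: the paper's proof likewise just rewrites Theorem~\ref{thm:main} via $\lambda(G)\le\Lambda_p(n)\,d_p(G)\le\Lambda_p(n)\,D$ (so the $o(1)$ depends only on $n$ and $p$) and takes sharpness from the constructions behind $\Lambda_p(n)$. One small correction to your aside: Wilf's bound is exactly the right tool for $p=2$, since $1-1/\omega\le 2d_2(G)\le 2D$ turns $\lambda(G)\le(1-1/\omega)n$ directly into $\lambda(G)\le 2Dn$, which is how the paper argues.
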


The endpoint $p=2$ is controlled by Wilf's spectral clique bound~\cite{Wilf1986}. Let $\omega=\omega(G)$. Since a clique $K_\omega$ gives $d_2(G)\ge \binom{\omega}{2}/\omega^2=(1-1/\omega)/2$, while Wilf's theorem gives $\lambda(G)\le (1-1/\omega)n$, it follows that $\lambda(G)\le 2d_2(G)n$. Equality is attained by $K_n$, and hence the quadratic case of Problem~\ref{prob:guiduli-family}(3) has the exact finite answer $\Lambda_2(n)=2n$.

Graphon methods provide the analytic language used in this paper. Dense graphons were introduced by Lov\'asz and Szegedy \cite{LovaszSzegedy2006}; for further background, we refer the reader to the monograph of Lov\'asz \cite{Lovasz2012}. Graphons have become a fundamental tool in extremal graph theory, graph limits and random graphs. In the spectral direction, Szegedy studied limits of kernel operators and proved a spectral regularity lemma \cite{Szegedy2011}. Bollob\'as, Borgs, Chayes and Riordan used graph limits together with the largest eigenvalue to determine percolation thresholds for dense graph sequences \cite{BollobasBorgsChayesRiordan2010}, while Bollob\'as, Janson and Riordan developed a kernel-operator approach to phase transitions in inhomogeneous random graphs \cite{BollobasJansonRiordan2007}. 

The use of graph limits in spectral extremal problems has also developed rapidly in recent years. Liu \cite{Liu2024GraphLimitsSpectral} used graph limits to resolve spectral extremal conjectures concerning linear combinations of graph eigenvalues, including the Nordhaus--Gaddum type extremal problem for $\lambda_1(G)+\lambda_1(\overline G)$ and the extremal problem for the signless Laplacian $Q$-spread. Breen, Riasanovsky, Tait and Urschel used graph limits, operator methods and numerical analysis to study the adjacency spread, namely the difference between the largest and smallest eigenvalues of the adjacency matrix, and resolved asymptotic versions of longstanding conjectures on the maximum spread of graphs and bipartite graphs \cite{BreenRiasanovskyTaitUrschel2022}. Kumar, Liu, Monterde, Pragada and Tait \cite{KumarLiuMonterdePragadaTait2026} recently studied the maximum of the spectral sum $\lambda_1(G)+\lambda_2(G)$, and combined graph limits, convex geometry, exterior algebra and convex optimization to prove a conjecture of Ebrahimi, Mohar, Nikiforov and Ahmady. In the direction of large-scale networks and sampled graphs, Vizuete, Garin and Frasca \cite{VizueteGarinFrasca2021} studied the Laplacian spectrum of large graphs sampled from graphons, showing that, under suitable assumptions, the Laplacian eigenvalues of sampled graphs and related network quantities can be approximated by spectral data of the underlying graphon. Gao and Caines \cite{GaoCaines2019} studied spectral representations, eigenfunctions and spectral approximations of graphons from the perspective of control for large-scale network systems, illustrating the role of the spectral structure of graphons as Hilbert--Schmidt integral operators in the analysis of very large network systems.
Sparse graphon theories, especially the $L^p$ theory of Borgs, Chayes, Cohn and Zhao \cite{BorgsChayesCohnZhao2018,BorgsChayesCohnZhao2019}, provide a framework for sparse limits with unbounded average degree. Graphon and density methods have also led to important advances in extremal graph theory, including flag algebras \cite{Razborov2007}, triangle and clique density theorems \cite{Razborov2008,Reiher2016}, and undecidability phenomena for homomorphism density inequalities \cite{HatamiNorine2011}.

\section{Notation and preliminaries}

All graphs are finite and simple. For $S\subseteq V(G)$, let $G[S]$ be the induced subgraph and let $e(S)=e(G[S])$. The quantity $d_p(G)$ is unchanged if one requires $e(H)\le D|V(H)|^p$ for every subgraph $H$ instead of every induced subgraph, since every subgraph on $S$ has at most $e(G[S])$ edges. For an $n$-vertex graph $G$, the associated step graphon $W_G$ is obtained by partitioning $[0,1]$ into intervals $I_v$ of length $1/n$ and setting $W_G(x,y)=1$ when $x\in I_u$, $y\in I_v$ and $uv\in E(G)$, and $W_G(x,y)=0$ otherwise.

For vertex sets $A,B\subseteq V(G)$, write
\[
\cE(A,B)=\sum_{u\in A,\ v\in B}A_{uv}.
\]
Thus $\cE(A,A)=2e(A)$, and if $A\subseteq B$, then $\cE(A,B)=2e(A)+e(A,B\setminus A)$. This convention is used in all graph layer-cake identities below.

If $W$ is a bounded symmetric nonnegative kernel on a probability space $(\Omega,\mu)$, then $T_W$ denotes the integral operator $T_Wf(x)=\int_\Omega W(x,y)f(y)\dd\mu(y)$ on $L^2(\Omega)$. Since $W\ge0$, one has $|\langle T_Wf,f\rangle|\le\langle T_W|f|,|f|\rangle$, and hence the operator norm is obtained by considering nonnegative test functions in Rayleigh quotients.

We use repeatedly the following layer-cake notation. For a nonnegative measurable function $f$ on $\Omega$, set $S_t(f)=\{x:f(x)\ge t\}$ and $N_f(t)=\mu(S_t(f))$. If $\|f\|_2=1$, then
\[
\int_0^\infty 2tN_f(t)\dd t=1,
\]
and $\int_0^\infty N_f(t)\dd t=\|f\|_1\le1$.

\begin{lemma}\label{lem:layercake}
Let $W$ be a nonnegative kernel and let $f,g\ge0$ be measurable functions. Then
\[
\int_{\Omega^2}W(x,y)f(x)g(y)\dd\mu(x)\dd\mu(y)=\int_0^\infty\int_0^\infty \int_{S_s(f)\times S_t(g)}W(x,y)\dd\mu(x)\dd\mu(y)\dd s\dd t,
\]
where both sides are allowed to be $+\infty$.
\end{lemma}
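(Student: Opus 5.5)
The plan is to apply the layer-cake representation $u=\int_0^\infty \1_{S_s(u)}(x)\dd s$, valid pointwise for every nonnegative measurable $u$ and every $x\in\Omega$ (including the case $u(x)=+\infty$), to both $f$ and $g$, and then interchange the order of integration by Tonelli's theorem. Since everything involved is nonnegative, Tonelli applies without any integrability hypothesis, and the identity holds in $[0,\infty]$. This is also why both sides are allowed to be $+\infty$.

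\textbf{Step 1: the integrand.} For fixed $(x,y)\in\Omega^2$ we have
\[
f(x)g(y)=\Bigl(\int_0^\infty\1_{\{f(x)\ge s\}}\dd s\Bigr)\Bigl(\int_0^\infty\1_{\{g(y)\ge t\}}\dd t\Bigr)=\int_0^\infty\int_0^\infty \1_{S_s(f)}(x)\1_{S_t(g)}(y)\dd s\dd t .
\]
The second equality is Tonelli on $[0,\infty)^2$ applied to a product of nonnegative functions of $s$ and of $t$. Multiplying by $W(x,y)\ge0$ gives an integral over $(s,t)$ of the nonnegative function
\[
F(x,y,s,t)=W(x,y)\1_{S_s(f)}(x)\1_{S_t(g)}(y).
\]

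\textbf{Step 2: measurability.} We check that $F$ is jointly measurable on $\Omega^2\times[0,\infty)^2$. The set $\{(x,s): f(x)\ge s\}$ is measurable in the product $\sigma$-algebra, because it equals $\{(x,s): f(x)-s\ge 0\}$ and $(x,s)\mapsto f(x)-s$ is measurable. Here one has to be careful when $f(x)=\infty$; in that case one can instead write the set as $\bigcap_{q\in\mathbb Q}\bigl(\{f\ge q\}\times[0,q]\cup \Omega\times(q,\infty)\bigr)$. The same holds for $g$. Therefore $F$ is a product of measurable functions.

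\textbf{Step 3: interchange.} Integrate $F$ over $\Omega^2$ with respect to $\mu\otimes\mu$ and over $[0,\infty)^2$ with respect to Lebesgue measure. Tonelli on the $\sigma$-finite product space lets us integrate first in $(x,y)$, which produces
\[
\int_{S_s(f)\times S_t(g)}W\dd\mu\dd\mu ,
\]
and then in $(s,t)$. This yields exactly the right-hand side of the lemma, with equality in $[0,\infty]$.

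There is no real obstacle here. The only point requiring care is the joint measurability in Step 2, together with the $\sigma$-finiteness of Lebesgue measure on $[0,\infty)$ and of the probability space, both of which are immediate.
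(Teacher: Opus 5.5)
Your proof is correct and is exactly the paper's argument: Tonelli's theorem applied to the pointwise identity $f(x)g(y)=\int_0^\infty\int_0^\infty \1_{f(x)\ge s}\1_{g(y)\ge t}\dd s\dd t$. Your Steps 2--3 only spell out the measurability and $\sigma$-finiteness that the paper leaves implicit.

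One small slip in Step 2: as written, $\bigcap_{q\in\mathbb Q}\bigl(\{f\ge q\}\times[0,q]\cup\Omega\times(q,\infty)\bigr)$ equals $\{f=\infty\}\times[0,\infty)$, not $\{(x,s):f(x)\ge s\}$. The correct version is $\bigcap_{q\in\mathbb Q}\bigl(\{f\ge q\}\times[q,\infty)\cup\Omega\times[0,q)\bigr)$. The detour is unnecessary anyway, since for finite $s$ the map $(x,s)\mapsto f(x)-s$ is already a well-defined measurable $(-\infty,\infty]$-valued function.
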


\begin{proof}
This is Tonelli's theorem applied to $f(x)g(y)=\int_0^\infty\int_0^\infty \1_{f(x)\ge s}\1_{g(y)\ge t}\dd s\dd t$.
\end{proof}

\begin{lemma}\label{lem:nested-incidence}
Let $1<p<2$, let $W$ be a symmetric nonnegative kernel on an atomless probability space with $\Delta_p(W)\le1$ and $\|W\|_\infty\le R$, where $R\ge1$. If $A\subseteq B\subseteq\Omega$, $a=\mu(A)$ and $b=\mu(B)$, then
\[
\int_{A\times B}W\le C_p b\min\{Ra,a^{p-1}\}.
\]
\end{lemma}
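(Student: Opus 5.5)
The estimate is really two separate bounds, $\int_{A\times B}W\le Rab$ and $\int_{A\times B}W\le C_p\,b\,a^{p-1}$. The first is immediate from $\|W\|_\infty\le R$, since $\mu^2(A\times B)=ab$. So the whole task is the second bound. The idea is to cut $B$ into pieces of measure comparable to $a$ and apply the hereditary bound $\Delta_p(W)\le1$ to each piece together with $A$.

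\emph{Splitting off the diagonal part.} Since $A\subseteq B$,
\[
\int_{A\times B}W=\int_{A\times A}W+\int_{A\times(B\setminus A)}W.
\]
The definition of $\Delta_p$ bounds the first term by $a^p$. Because $a\le b$, we have $a^p\le b\,a^{p-1}$.

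\emph{Partitioning the remainder.} For the second term, we use that $\mu$ is atomless. This lets us partition $B\setminus A$ into measurable pieces $P_1,\dots,P_m$, each of measure at most $a$, with $m\le \lceil (b-a)/a\rceil\le b/a+1\le 2b/a$. We may assume $a>0$; otherwise the integral vanishes. For each piece, the sets $A$ and $P_j$ are disjoint, so by symmetry of $W$,
\[
\int_{(A\cup P_j)^2}W\ \ge\ \int_{A\times P_j}W+\int_{P_j\times A}W=2\int_{A\times P_j}W.
\]
Combining this with $\Delta_p(W)\le1$ and $\mu(A\cup P_j)\le 2a$ gives $\int_{A\times P_j}W\le \tfrac12(2a)^p=2^{p-1}a^p$.

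\emph{Summing.} Adding over the $m$ pieces yields
\[
\int_{A\times(B\setminus A)}W\le \frac{2b}{a}\cdot 2^{p-1}a^p=2^p\,b\,a^{p-1}.
\]
Together with the diagonal term, this gives $\int_{A\times B}W\le (1+2^p)\,b\,a^{p-1}$. Taking the minimum with the trivial bound $Rab$ proves the lemma with $C_p=1+2^p$.

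The only step needing care is the partition of $B\setminus A$. Atomlessness is exactly what allows pieces of measure at most $a$ with the count $m=O(b/a)$. Choosing pieces of size comparable to $a$, rather than applying $\Delta_p$ to $B$ itself, is essential. Applying it to $B$ directly would only give the weaker bound $b^p$, and since $p-1<1$ this cannot be converted into $b\,a^{p-1}$ when $a\ll b$.
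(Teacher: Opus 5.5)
Your proof is correct, but it bounds the off-diagonal part $A\times(B\setminus A)$ by a different mechanism than the paper. The paper sets $h(y)=\int_A W(x,y)\,d\mu(x)$ on $B$ and passes to its decreasing rearrangement $h^*$. It uses the identity $\int_0^t h^*=\sup_{E\subseteq B,\ \mu(E)=t}\int_E h$, which is where atomlessness enters, together with $\int_{A\times E}W\le\int_{(A\cup E)^2}W\le(a+t)^p$. This gives $\int_0^a h^*\le(2a)^p$, hence $h^*(u)\le h^*(a)\le 2^pa^{p-1}$ for $u\ge a$, and so $\int_B h\le(2a)^p+(b-a)2^pa^{p-1}=2^pba^{p-1}$. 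You instead chop $B\setminus A$ into $O(b/a)$ pieces of measure at most $a$, with atomlessness entering through the intermediate-value property of $\mu$. You then apply the density bound to each $A\cup P_j$, using symmetry of $W$ to gain a factor $\tfrac12$.

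Both arguments rest on the same observation: $A$ sends mass at most $O(a^p)$ into any set of measure about $a$. Your version is more elementary, since it needs no rearrangement theory. The paper's version yields slightly more: a pointwise bound $2^pa^{p-1}$ on the ``degree into $A$'' of all but a measure-$a$ portion of $B$. That is the continuous analogue of the vertex-sorting step in Lemma~\ref{lem:tail}, which keeps the graphon and graph arguments parallel. The constants are comparable: the paper gets $2^p$ and you get $1+2^p$. In fact $\lceil (b-a)/a\rceil\le b/a$ directly, which would improve your constant to $1+2^{p-1}$.
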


\begin{proof}
If $a=0$, then there is nothing to prove. The bound $\int_{A\times B}W\le Rab$ follows immediately from $\|W\|_\infty\le R$. It remains to prove $\int_{A\times B}W\le C_pb a^{p-1}$. Define $h(y)=\int_A W(x,y)\dd\mu(x)$ on $B$, and let $h^*$ be the decreasing rearrangement of $h$ on $[0,b]$. Since the underlying probability space is atomless, the Hardy--Littlewood maximal property of decreasing rearrangements gives
\[
\int_0^t h^*(u)\dd u=\sup_{\substack{E\subseteq B\\ \mu(E)=t}}\int_Eh\dd\mu
\]
for every $0<t\le b$. For every measurable $E\subseteq B$ with $\mu(E)=t$, nonnegativity and $\Delta_p(W)\le1$ give $\int_Eh\dd\mu=\int_{A\times E}W\le\int_{(A\cup E)\times(A\cup E)}W\le(a+t)^p$. Hence $\int_0^t h^*(u)\dd u\le(a+t)^p$ for every $0<t\le b$. Taking $t=a$ gives $h^*(a)\le a^{-1}\int_0^a h^*(u)\dd u\le2^pa^{p-1}$. Since $A\subseteq B$, we have $a\le b$, and therefore
\[
\int_Bh\dd\mu=\int_0^b h^*(u)\dd u\le(2a)^p+(b-a)2^pa^{p-1}\le C_pb a^{p-1}.
\]
This proves the lemma.
\end{proof}

\begin{lemma}\label{lem:hardy}
Let $1<p<2$, let $R\ge1$, and put $\phi_R(u)=\min\{Ru,u^{p-1}\}$ for $0\le u\le1$. Suppose $N:[0,\infty)\to[0,1]$ is measurable and $\int_0^\infty2tN(t)\dd t\le1$. Let $F(t)=\int_0^tN(s)\dd s$. Then
\[
\int_0^\infty F(t)\phi_R(N(t))\dd t\le C_p\Phi_p(R).
\]
\end{lemma}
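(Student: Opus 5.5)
The plan is to discard any monotonicity of $N$ (none is assumed) and bound the integral block by block over dyadic ranges of $t$, using only $0\le N\le1$ and the moment constraint $\int_0^\infty 2tN\le1$. Two crude bounds on $F$ suffice. First, $F(t)\le t$. Second, $\int_0^1N\le1$ and $\int_1^\infty N\le\int_1^\infty tN\le 1/2$, so $F(t)\le 3/2$ for all $t$. On $[0,1]$ we use $F(t)\le t$ and $\phi_R(N)\le N^{p-1}\le1$, which gives a contribution of at most $1/2\le\Phi_p(R)$. It remains to bound
\[
\tfrac32\sum_{T=2^k,\,k\ge0}\int_T^{2T}\phi_R(N(t))\dd t .
\]
For each dyadic $T$, set $m_T=\int_T^{2T}tN\dd t$, so that $\sum_T m_T\le 1/2$ and $\int_T^{2T}N\le m_T/T$.

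Next we split at the threshold $u_0=R^{-1/(2-p)}$, where $Ru_0=u_0^{p-1}$. On $\{N\le u_0\}$ we use $\phi_R(N)\le RN$; on $\{N>u_0\}$ we use $\phi_R(N)\le N^{p-1}$. In each block we estimate each piece in two ways.
\begin{itemize}
\item[(i)] On $\{N\le u_0\}$: the piece is at most $R\int_T^{2T}N\le Rm_T/T$, and also at most $Ru_0T$.
\item[(ii)] On $\{N>u_0\}$: this set has measure at most $\min\{T,m_T/(Tu_0)\}$. By Hölder, $\int N^{p-1}\le|\text{set}|^{2-p}(m_T/T)^{p-1}$. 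This gives the two bounds $T^{3-2p}m_T^{p-1}$ and $(m_T/(Tu_0))^{2-p}(m_T/T)^{p-1}=Rm_T/T$.
\end{itemize}
So each block is at most $2\min\{Rm_T/T,\;T^{3-2p}m_T^{p-1}+Ru_0T\}$. Balancing the two terms in either case gives a block value of order $m_T^{1/2}R^{(3/2-p)/(2-p)}$, attained at $T\asymp (m_TR^{1/(2-p)})^{1/2}$. This is exactly the exponent appearing in $\Phi_p$, which is a good sign.

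The main obstacle is summing these block bounds over $T$ without losing a factor, since $\sum_T m_T^{1/2}$ is not controlled by $\sum_T m_T$. We treat the three regimes separately, with $T_*=R^{1/(4-2p)}$ as the crossover scale.
\begin{itemize}
\item \textbf{Case $p<3/2$.} Use $m_T\le1$ to bound the blocks by $\min\{R/T,\;T^{3-2p}+Ru_0T\}$. The term $Ru_0T$ is at most $T^{3-2p}$ up to constants for $T\le T_*$ (check: $Ru_0T_*^{2p-2}\le1$). Both $\sum_{T\le T_*}T^{3-2p}$ and $\sum_{T>T_*}R/T$ are geometric and dominated by their values at $T_*$, which give $R^{(3/2-p)/(2-p)}$.
\item \textbf{Case $p>3/2$.} For $T\le T_*$ the term $T^{3-2p}m_T^{p-1}\le T^{3-2p}$ is summable. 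Around $T_*$ the term $Ru_0T$ must again be absorbed, now at $T_*$ where it is $O(1)$. For $T>T_*$ use $\sum_T Rm_T/T\le (R/T_*)\sum_T m_T$. We need to recheck that these are all $O(1)$; if $Ru_0T$ at the crossover does not behave as hoped, we will instead compare against $Rm_T/T$ using $\min$ more carefully.
\item \textbf{Case $p=3/2$.} Here $u_0=R^{-2}$ and $Ru_0T=T/R$, so the blocks are $\min\{Rm_T/T,\;m_T^{1/2}+T/R\}$. For $T\le R$ bound each block by $m_T^{1/2}+T/R$. By Cauchy--Schwarz, $\sum_{T\le R}m_T^{1/2}\le(\log_2(2R))^{1/2}(\sum m_T)^{1/2}\lesssim\sqrt{\log(eR)}$, while $\sum_{T\le R}T/R\le 2$. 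For $T>R$ use $\sum_T Rm_T/T\le\sum_T m_T\le1$.
\end{itemize}
Combining the cases gives $C_p\Phi_p(R)$. The delicate point is the critical case: Cauchy--Schwarz over the $O(\log R)$ active dyadic scales produces exactly $\sqrt{\log R}$, rather than the naive $\log R$.
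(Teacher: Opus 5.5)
Your proof is correct and follows essentially the same scheme as the paper. Both arguments bound $F$ by a constant, decompose into dyadic blocks in $t$, use the kink $u_0=R^{-1/(2-p)}$ of $\phi_R$ and the crossover scale $T_*=u_0^{-1/2}$ (the paper's $a_0^{-1/2}$), and apply Cauchy--Schwarz over the $O(\log R)$ active scales when $p=3/2$. The only variation is that you split each block pointwise at $u_0$ and use H\"older plus a Chebyshev bound, where the paper applies Jensen to the concave $\phi_R$ and sorts blocks by their averages.

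The case $p>3/2$, which you left hedged, does close: $R/T_*=Ru_0T_*=R^{(3-2p)/(4-2p)}\le1$, so $\sum_{T\le T_*}Ru_0T\le2$ and $\sum_{T>T_*}Rm_T/T\le1/2$.
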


\begin{proof}
The contribution of $0\le t\le1$ is at most $\int_0^1t\dd t\le1/2$, since $F(t)\le t$ and $\phi_R(N(t))\le1$. For $j\ge0$, let $I_j=[2^j,2^{j+1}]$, $T_j=2^j$, and $E_j=\int_{I_j}2tN(t)\dd t$. Then $\sum_jE_j\le1$. The function $\phi_R$ is increasing and concave. It is linear on $[0,a_0]$ and equal to $u^{p-1}$ on $[a_0,1]$, where $a_0=R^{-1/(2-p)}$, and the right derivative at $a_0$ is smaller than the left derivative. Since $t\asymp T_j$ on $I_j$, Jensen's inequality gives
\[
\int_{I_j}\phi_R(N(t))\dd t\le C T_j\phi_R\left(C\frac{E_j}{T_j^2}\right).
\]
Absorbing the absolute constant into $C_p$, and using $F(t)\le\int_0^\infty N(u)\dd u\le3/2$, it remains to bound $\sum_jT_j\phi_R(E_j/T_j^2)$. The last inequality follows from $\int_0^1N(u)\dd u\le1$ and $\int_1^\infty N(u)\dd u\le\int_1^\infty uN(u)\dd u\le1/2$. Split the indices into those for which $E_j/T_j^2\le a_0$ and those for which $E_j/T_j^2>a_0$. In the first range the contribution is $R\sum E_j/T_j$. Under the constraints $0\le E_j\le a_0T_j^2$ and $\sum E_j\le1$, the sum $\sum E_j/T_j$ is maximized by filling the smallest dyadic scales first. If $J$ is chosen by $T_J\le a_0^{-1/2}<T_{J+1}$, then
\[
\sum_{E_j/T_j^2\le a_0}\frac{E_j}{T_j}\le\sum_{j\le J}a_0T_j+\frac{1}{T_J}\sum_{j>J}E_j\le C a_0^{1/2}.
\]
Thus the first contribution is at most $CRa_0^{1/2}=CR^{(3/2-p)/(2-p)}$, which is $O(1)$ when $p\ge3/2$. In the second range, writing $q=p-1$, one obtains
\[
\sum_{E_j/T_j^2>a_0}T_j\left(\frac{E_j}{T_j^2}\right)^q=\sum_{E_j/T_j^2>a_0}E_j^{p-1}T_j^{3-2p}.
\]
The condition $E_j/T_j^2>a_0$ and the inequality $E_j\le1$ imply $T_j\le a_0^{-1/2}$. Holder's inequality with exponents $1/(p-1)$ and $1/(2-p)$ yields
\[
\sum_{E_j/T_j^2>a_0}E_j^{p-1}T_j^{3-2p}\le\left(\sum_jE_j\right)^{p-1}\left(\sum_{T_j\le a_0^{-1/2}}T_j^{\frac{3-2p}{2-p}}\right)^{2-p}.
\]
If $p>3/2$, the dyadic sum is bounded. If $p=3/2$, it has $O(\log(eR))$ terms and the final exponent is $2-p=1/2$, giving $O(\sqrt{\log(eR)})$. If $1<p<3/2$, the dyadic sum is dominated by its largest term and gives $O(R^{(3/2-p)/(2-p)})$. Combining the two estimates proves the lemma.
\end{proof}

\begin{lemma}\label{lem:finite-graphon}
Let $1<p<2$ and let $G$ be an $n$-vertex graph with at least one edge. Set $W_{G,p}=n^{2-p}W_G$. Then $\|W_{G,p}\|_\infty=n^{2-p}$, $\|T_{W_{G,p}}\|_{2\to2}=\lambda(G)/n^{p-1}$, and
\[
2d_p(G)\le \Delta_p(W_{G,p})\le C_pd_p(G).
\]
\end{lemma}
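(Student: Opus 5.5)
The first two claims are direct computations. The kernel $W_{G,p}=n^{2-p}W_G$ takes only the values $0$ and $n^{2-p}$, and the latter value occurs because $G$ has an edge; hence $\|W_{G,p}\|_\infty=n^{2-p}$. Next, $T_{W_G}$ acts on the step functions $f=\sum_v x_v\1_{I_v}$ as $A(G)/n$ acts on $x$, since $\|f\|_2^2=|x|^2/n$. The operator annihilates the orthogonal complement of the step functions. Therefore $\|T_{W_G}\|=\lambda(G)/n$, and multiplying by $n^{2-p}$ gives $\lambda(G)/n^{p-1}$.

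For the lower bound $\Delta_p(W_{G,p})\ge2d_p(G)$, take $S$ attaining $d_p(G)$ and set $A=\bigcup_{v\in S}I_v$, so that $\mu(A)=|S|/n$. Then
\[
\int_{A\times A}W_{G,p}=n^{2-p}\cdot\frac{2e(S)}{n^2}=\frac{2e(S)}{n^p}.
\]
Dividing by $\mu(A)^p=|S|^p/n^p$ gives $2e(S)/|S|^p=2d_p(G)$.

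The upper bound is the main step, because $A$ is an arbitrary measurable set rather than a union of intervals. Write $w_v=n\mu(A\cap I_v)\in[0,1]$ and $m=\sum_v w_v=n\mu(A)$. Then
\[
\frac{\int_{A\times A}W_{G,p}}{\mu(A)^p}=\frac{w^{\mathsf T}A(G)w}{m^p}.
\]
So it suffices to show $w^{\mathsf T}A(G)w\le C_p d_p(G)m^p$ for every $w\in[0,1]^V$ with $m>0$. I would use the layer-cake identity, Lemma~\ref{lem:layercake}, in its discrete form:
\[
w^{\mathsf T}A(G)w=\int_0^1\int_0^1\cE(S_s,S_t)\dd s\dd t,\qquad S_s=\{v:w_v\ge s\}.
\]
The sets $S_s$ are nested. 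For $S_s\subseteq S_t$ we have $\cE(S_s,S_t)\le\cE(S_t,S_t)$, and the case $S_t\subseteq S_s$ is symmetric. Hence $\cE(S_s,S_t)\le 2d_p(G)|S_{\min(s,t)}|^p$.

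The remaining obstacle is to bound $\iint N(\min(s,t))^p\dd s\dd t$ by $C_pm^p$, where $N(s)=|S_s|$ is nonincreasing with $\int_0^1N=m$. This integral equals $2\int_0^1(1-s)N(s)^p\dd s$. Since $N$ is nonincreasing, $sN(s)\le\int_0^sN\le m$, and therefore
\[
N(s)^p\le N(s)\Bigl(\frac{m}{s}\Bigr)^{p-1}.
\]
The difficulty is that this bound alone diverges logarithmically. So I would combine it with the trivial bound $N(s)\le n$ and integrate against $(1-s)$, but this still seems lossy.

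A cleaner alternative avoids the integral altogether: bound $\cE(S_s,S_t)$ directly, exploiting the nesting through the argument of Lemma~\ref{lem:nested-incidence}, which gives $\cE(A',B')\le C_p|B'|\,|A'|^{p-1}d_p(G)$ for $A'\subseteq B'$. I would then control $\int_0^1\int_0^s N(s)^{p-1}N(t)\dd t\dd s$. Here $\int_0^sN(t)\dd t\le m$, and $\int_0^1N(s)^{p-1}\dd s\le m^{p-1}$ by Jensen's inequality with the concave map $x\mapsto x^{p-1}$ on the probability space $[0,1]$. The product is then at most $m\cdot m^{p-1}=m^p$, which yields $\Delta_p(W_{G,p})\le C_pd_p(G)$.
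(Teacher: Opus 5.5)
Your proof is correct. The height, operator-norm and lower-bound parts match the paper, but your upper bound takes a genuinely different route.

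The paper rounds the weights $\theta_v=n\mu(A\cap I_v)$ to a random vertex set $R$ with independent marginals $\theta_v$. This gives $\mathbb E\,e(G[R])=\sum_{uv\in E(G)}\theta_u\theta_v$, and the paper then bounds $\mathbb E|R|^p\le(S^2+S)^{p/2}\le C_pS^p$. The case $S<1$ is handled separately via $S^2\le S^p$ and $d_p(G)\ge2^{-p}$, which is where the edge hypothesis enters.

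You argue deterministically instead. You take the layer-cake decomposition over the level sets of $w$ and use the discrete nested-incidence bound $\cE(S_s,S_t)\le2^{p+1}d_p(G)N(t)N(s)^{p-1}$ for $t\le s$. You then finish with $\int_0^sN\le m$ and Jensen. That incidence bound is exactly the estimate proved inside Lemma~\ref{lem:tail}, and it uses $d_p(G)$ only on genuine vertex sets. So there is no circularity, whereas quoting Lemma~\ref{lem:nested-incidence} itself for $W_{G,p}$ would presuppose the bound you are proving.

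Your route gives the explicit constant $2^{p+2}$ and needs neither a case split nor the edge hypothesis. The paper's rounding buys near-losslessness: when $\mathbb E|R|\to\infty$ it gives $\mathbb E|R|^p=(1+o(1))(\mathbb E|R|)^p$. The same device with block probabilities $s_i/a_i$ produces the sharp factor $2+o(1)$ in the supercritical upper bound. There your constant-losing incidence step would not suffice.

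You were right to abandon your first attempt, and it is not merely lossy. For $w\equiv\eps$ one has $\iint N(\min(s,t))^p\dd s\dd t\approx2\eps n^p$, while $m^p=\eps^pn^p$.
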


\begin{proof}
The height identity is immediate. The operator $T_{W_G}$ maps every function to a function constant on the vertex intervals, and on that finite-dimensional subspace it is represented by $A(G)/n$. Hence $\|T_{W_{G,p}}\|_{2\to2}=n^{2-p}\lambda(G)/n=\lambda(G)/n^{p-1}$. For the lower bound on $\Delta_p(W_{G,p})$, let $U\subseteq V(G)$ attain $d_p(G)$ and put $A_U=\bigcup_{v\in U}I_v$. Then
\[
\frac{\int_{A_U\times A_U}W_{G,p}}{\mu(A_U)^p}=\frac{2n^{2-p}e(G[U])/n^2}{(|U|/n)^p}=\frac{2e(G[U])}{|U|^p}=2d_p(G).
\]
It remains to prove the upper bound. Let $A\subseteq[0,1]$ be measurable and write $\theta_v=n\mu(A\cap I_v)\in[0,1]$ and $S=\sum_v\theta_v=n\mu(A)$. Then
\[
\int_{A\times A}W_{G,p}=2n^{-p}\sum_{uv\in E(G)}\theta_u\theta_v.
\]
If $S<1$, then $\sum_{uv\in E(G)}\theta_u\theta_v\le S^2/2\le S^p/2$, and since $G$ has an edge, $d_p(G)\ge2^{-p}$, so the desired estimate follows. If $S\ge1$, let $R$ be the random vertex set obtained by selecting each vertex $v$ independently with probability $\theta_v$. Then $\mathbb E e(G[R])=\sum_{uv\in E(G)}\theta_u\theta_v$, while $e(G[R])\le d_p(G)|R|^p$ for every outcome. Since $1<p<2$ and $\mathbb E|R|=S\ge1$, we have $$\mathbb E|R|^p\le(\mathbb E|R|^2)^{p/2}\le(S^2+S)^{p/2}\le C_pS^p.$$ Therefore $\sum_{uv\in E(G)}\theta_u\theta_v\le C_pd_p(G)S^p$, and substituting into the displayed identity gives $\int_{A\times A}W_{G,p}\le C_pd_p(G)\mu(A)^p.$
\end{proof}

\begin{lemma}\label{lem:block-averaging}
Let $x_1\ge x_2\ge\cdots\ge x_n\ge0$, and let $P_1,\ldots,P_r$ be consecutive intervals in $\{1,\ldots,n\}$ of sizes at most $2M$. If $y$ is obtained from $x$ by replacing $x$ on each $P_i$ by its average on $P_i$, then
\[
\sum_i\sum_{v\in P_i}(x_v-y_v)^2\le 2M\|x\|_\infty^2.
\]
\end{lemma}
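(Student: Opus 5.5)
Since the $P_i$ are consecutive and cover the indices they are applied to, the left-hand side splits as a sum over blocks, and on each block it is the variance-type quantity $\sum_{v\in P_i}(x_v-\bar x_i)^2$, where $\bar x_i$ is the average of $x$ on $P_i$. The plan is to bound each block's contribution by $|P_i|$ times the square of the oscillation of $x$ on $P_i$, and then use monotonicity to telescope these oscillations.

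Write $P_i=\{l_i,\ldots,r_i\}$ and let $\delta_i=x_{l_i}-x_{r_i}\ge0$. Because $x$ is nonincreasing, both $x_v$ and $\bar x_i$ lie in $[x_{r_i},x_{l_i}]$ for $v\in P_i$. Hence $|x_v-\bar x_i|\le\delta_i$, and
\[
\sum_{v\in P_i}(x_v-y_v)^2\le|P_i|\,\delta_i^2\le 2M\delta_i\cdot\delta_i .
\]
Now I use $\delta_i\le x_{l_i}\le\|x\|_\infty$, since $x\ge0$. This gives $\sum_i\sum_{v\in P_i}(x_v-y_v)^2\le 2M\|x\|_\infty\sum_i\delta_i$.

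Since the intervals are consecutive and disjoint, the ranges $[x_{r_i},x_{l_i}]$ are non-overlapping subintervals of $[x_n,x_1]\subseteq[0,\|x\|_\infty]$. Therefore $\sum_i\delta_i\le x_1-x_n\le\|x\|_\infty$, which yields the bound $2M\|x\|_\infty^2$.

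There is no real obstacle here. The only point needing care is the telescoping $\sum_i\delta_i\le\|x\|_\infty$, which relies on the ordering of $x$ together with the blocks being consecutive and disjoint. One could sharpen $|x_v-\bar x_i|\le\delta_i$, but that is unnecessary for the stated constant.
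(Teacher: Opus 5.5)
Your proof is correct and follows the paper's argument: you bound each block's contribution by $|P_i|\,\delta_i^2\le 2M\delta_i^2$ using that $x_v$ and the block average both lie in $[x_{r_i},x_{l_i}]$, then telescope $\sum_i\delta_i\le x_1\le\|x\|_\infty$ using monotonicity and disjointness of the blocks. Your explicit step $\delta_i^2\le\delta_i\|x\|_\infty$ (valid since $x\ge0$) fills in the passage from $\sum_i\delta_i^2$ to $\|x\|_\infty^2$, which the paper leaves implicit.
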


\begin{proof}
For a block $P_i=[a_i,b_i]$, every entry of the block lies between $x_{a_i}$ and $x_{b_i}$, so $\sum_{v\in P_i}(x_v-y_v)^2\le |P_i|(x_{a_i}-x_{b_i})^2\le2M(x_{a_i}-x_{b_i})^2$. The block intervals are disjoint and ordered, hence $\sum_i(x_{a_i}-x_{b_i})\le x_1\le\|x\|_\infty$, and the displayed estimate follows.
\end{proof}

\begin{lemma}\label{lem:tail}
Let $1<p<3/2$. Let $G$ be an $n$-vertex graph, let $D=d_p(G)$, and let $w\in\mathbb R_{\ge0}^{V(G)}$ satisfy $\|w\|_2\le1$ and $\|w\|_\infty\le\eps$. Then
\[
w^{\mathsf T}A(G)w\le C_pD\left(\eps^{3-2p}\sqrt n+n^{p-1}\right).
\]
\end{lemma}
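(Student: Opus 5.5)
The plan is to run the layer-cake argument of Lemma~\ref{lem:hardy} directly on the graph, using a discrete version of Lemma~\ref{lem:nested-incidence} in place of the graphon one. In this discrete version the cap $\|w\|_\infty\le\eps$ plays the role that the height $R$ plays there.

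\emph{Step 1: decompose the quadratic form.} Write $S_t=\{v:w_v\ge t\}$ and $N(t)=|S_t|$. Then $N(t)=0$ for $t>\eps$, $\int_0^\infty 2tN(t)\dd t=\|w\|_2^2\le1$, and $\int_0^\infty N(t)\dd t=\|w\|_1\le\sqrt n$. By Lemma~\ref{lem:layercake} applied to $A(G)$, and by symmetry in $(s,t)$,
\[
w^{\mathsf T}A(G)w=\int_0^\eps\!\!\int_0^\eps \cE(S_s,S_t)\dd s\dd t\le 2\int_0^\eps\!\!\int_0^s\cE(S_s,S_t)\dd t\dd s .
\]
For $t\le s$ the sets are nested: $S_s\subseteq S_t$.

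\emph{Step 2: discrete nested incidence bound.} I will show that if $A\subseteq B\subseteq V(G)$ with $a=|A|\ge1$ and $b=|B|$, then
\[
\cE(A,B)\le C_pD\,b\,a^{p-1}.
\]
The proof copies Lemma~\ref{lem:nested-incidence}. Let $h(y)=|N(y)\cap A|$ for $y\in B$, sorted in decreasing order. The top $k$ values sum to at most $\cE(A,A\cup E)\le 2e(A\cup E)\le 2D(a+k)^p$ for the corresponding $k$-set $E$. Taking $k=a$ bounds the $a$-th largest value by $2^{p+1}Da^{p-1}$. Summing over all of $B$ gives at most $2D(2a)^p+b\,2^{p+1}Da^{p-1}\le C_pDba^{p-1}$. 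The case $a=0$ is vacuous.

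\emph{Step 3: integrate and apply H\"older.} Plugging Step~2 into Step~1 gives
\[
w^{\mathsf T}Aw\le C_pD\int_0^\eps N(s)^{p-1}\int_0^sN(t)\dd t\dd s\le C_pD\sqrt n\int_0^\eps N(s)^{p-1}\dd s .
\]
Write $N^{p-1}=(sN)^{p-1}s^{1-p}$ and apply H\"older's inequality with exponents $1/(p-1)$ and $1/(2-p)$:
\[
\int_0^\eps N(s)^{p-1}\dd s\le\Big(\int_0^\eps sN(s)\dd s\Big)^{p-1}\Big(\int_0^\eps s^{-\frac{p-1}{2-p}}\dd s\Big)^{2-p}.
\]
Because $p<3/2$, the exponent satisfies $(p-1)/(2-p)<1$, so the last integral converges. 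It equals $C_p\eps^{(3-2p)/(2-p)}$, and raising it to the power $2-p$ yields $C_p\eps^{3-2p}$. Altogether this gives $w^{\mathsf T}Aw\le C_pD\eps^{3-2p}\sqrt n$, which is stronger than the claimed bound. The extra $n^{p-1}$ term only gives slack. I would use it as a safety margin if the discrete Step~2 needs a separate treatment of tiny levels: for example, if one bounds the region where $N(s)$ is small via $\cE(S_s,S_t)\le 2e(S_t)\le 2DN(t)^p\le 2Dn^{p-1}N(t)$, that part costs at most $C_pDn^{p-1}\int tN\le C_pDn^{p-1}$.

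\emph{Main obstacle.} The delicate point is Step~2 in the discrete setting, specifically the rearrangement step. One must check that the bound on the top $k$ degrees into $A$ uses $\cE(A,A\cup E)\le 2e(G[A\cup E])$, which holds because $\cE$ counts ordered pairs and $A\cup E$ contains both ends of each counted edge. One must also check that the constant stays uniform in $a\ge1$. Everything else is the same H\"older computation as in the proof of Lemma~\ref{lem:hardy}.
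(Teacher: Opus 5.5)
Your proof is correct and is essentially the paper's: it uses the same layer-cake identity and the same rearrangement argument giving $\cE(S_s,S_t)\le 2^{p+1}D\,N(t)\,N(s)^{p-1}$ for nested level sets $S_s\subseteq S_t$. The only difference is in the final one-dimensional integral: the paper splits at $t=n^{-1/2}$, whereas you bound the inner integral by $\|w\|_1\le\sqrt n$ on the whole range and then apply H\"older (or simply $N(s)\le s^{-2}$, since $s^{2-2p}$ is integrable at $0$ for $p<3/2$), which correctly shows that the $n^{p-1}$ term is superfluous.
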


\begin{proof}
For $t>0$, let $S_t=\{v:w_v\ge t\}$ and $N(t)=|S_t|$. Notice that when $s\le t$, we have   $S_t\subseteq S_s$ and 
$$\begin{aligned}&\quad \int_{0}^\epsilon \int_{0}^\epsilon  \cE(S_t,S_s)\dd s\dd t
\\&=\int_{0}^\epsilon \int_{0}^\epsilon  \sum_{u,v\in V(G)}1_{u\sim v}\1_{w_u\geq t}\1_{w_v\geq s}\dd s\dd t
\\&=\sum_{u,v\in V(G)}\1_{u\sim v}w_uw_v=w^{\mathsf T}Aw.\end{aligned}$$
Moreover, since $\cE(S_t,S_s)$ is symmetric in $s$ and $t$, we have $$\int_{0}^\epsilon \int_{0}^\epsilon  \cE(S_t,S_s)\dd s\dd t=2\int_{0}^\epsilon \int_{0}^t  \cE(S_t,S_s)\dd s\dd t.$$
Therefore, we obtain that $$w^{\mathsf T}Aw=2\int_0^\eps\int_0^t \cE(S_t,S_s)\dd s\dd t.$$
Now we control the upper bound of $\cE(S_t,S_s)$. Since
$$
    \cE(S_t,S_s)=\sum_{v\in S_s}\cE(S_t,v)=\sum_{i=1}^{N(s)}\cE(S_t,v_i),
$$where $S_s=\{v_1,\ldots,v_{N(s)}\}$ such that $\cE(S_t,v_1)\geq\cdots\geq \cE(S_t,v_{N(s)})$. Moreover,
$$\begin{aligned}\sum_{i=1}^{N(t)}\cE(S_t,v_i)&=\cE(S_t,\{v_1\ldots,v_{N(t)}\})
\\&\leq \cE(S_t\cup\{v_1\ldots,v_{N(t)}\},S_t\cup\{v_1\ldots,v_{N(t)}\})
\\&\leq 2D(N(t)+N(t))^p=2^{p+1}DN(t)^p.\end{aligned}$$
Hence $\cE(S_t,v_{N(t)})\leq N(t)^{-1}\sum_{i=1}^{N(t)}\cE(S_t,v_i)\leq 2^{p+1}DN(t)^{p-1}$. Therefore, $$\begin{aligned}
\cE(S_t,S_s)&=\sum_{i=1}^{N(s)}\cE(S_t,v_i)
\leq \sum_{i=1}^{N(t)}\cE(S_t,v_i)+\cE(S_t,v_{N(t)})(N(s)-N(t))
\\&\leq 2^{p+1}DN(t)^p+2^{p+1}DN(t)^{p-1}(N(s)-N(t))
\\&=2^{p+1}DN(t)^{p-1}N(s)
\le C_pD N(s)N(t)^{p-1}.
\end{aligned}$$
Hence
\[
w^{\mathsf T}Aw\le C_pD\int_0^\eps F(t)N(t)^{p-1}\dd t,\qquad F(t)=\int_0^tN(s)\dd s.
\]
Since $\|w\|_2\le1$, one has $N(t)\le\min(n,t^{-2})$. For $0<t<n^{-1/2}$, $F(t)\le nt$, and the corresponding contribution is at most $C_pDn^{p-1}$. For $n^{-1/2}<t<\eps$, $F(t)\le C\sqrt n$ and $N(t)\le t^{-2}$, so the contribution is at most
\[
C_pD\sqrt n\int_{n^{-1/2}}^\eps t^{-2p+2}\dd t\le C_pD\eps^{3-2p}\sqrt n.
\]
This proves the estimate.
\end{proof}

\begin{lemma}\label{lem:critical-hardy}
Let $N:[0,\infty)\to[0,n]$ be measurable, assume $N(t)=0$ for $t>1$, $N(t)\le t^{-2}$ for $t>0$ and $\int_0^\infty2tN(t)\dd t\le1$, and set $F(t)=\int_0^tN(s)\dd s$. Then
\[
\int_0^\infty F(t)\sqrt{N(t)}\dd t\le\left(\frac14+o(1)\right)\sqrt{n\log n},
\]
where $o(1)$ tends to $0$ as $n\to\infty$ uniformly over all such $N$.
\end{lemma}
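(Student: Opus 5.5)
The idea is to split the $t$-integral at a scale slightly above $n^{-1/2}$, and to trade the $L^2$-mass that $N$ spends below this scale, which makes $F$ large, against the mass left above it, which makes $\int\sqrt N$ large. Fix a slowly growing parameter $K=K(n)\to\infty$ with $K=o((\log n)^{1/4})$, for instance $K=\log\log n$. Set $\tau=Kn^{-1/2}$ and
\[
m=\int_0^\tau 2tN(t)\dd t\in[0,1].
\]
The target is the bound $\sqrt{m(1-m)}/2\le 1/4$ for the main term, with every other piece being $o(\sqrt{n\log n})$.

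\emph{Small $t$.} For $0\le t\le\tau$ we use $N\le n$ and $F(t)\le nt$. This gives
\[
\int_0^\tau F\sqrt N\dd t\le n^{3/2}\tau^2/2=K^2\sqrt n/2,
\]
which is $o(\sqrt{n\log n})$ by the choice of $K$.

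\emph{Bounding $F$ on $[\tau,1]$.} For $t\ge\tau$ we write $F(t)\le F(\tau)+\int_\tau^\infty N(s)\dd s$ and bound the two pieces separately.
\begin{enumerate}[(i)]
\item The constraints $0\le N\le n$ and $\int_0^\tau2sN(s)\dd s\le m$ make this a bathtub problem. The integral $\int_0^\tau N$ is maximized by putting $N=n$ on an initial interval $[0,s_0]$ with $ns_0^2=m$. Hence $F(\tau)\le\sqrt{nm}$.
\item On $[\tau,\infty)$ we have $\int_\tau^\infty N\le(2\tau)^{-1}\int_\tau^\infty2sN\dd s\le(1-m)/(2\tau)\le\sqrt n/(2K)=o(\sqrt n)$.
\end{enumerate}
So $F(t)\le\sqrt{nm}+o(\sqrt n)$ uniformly for $t\ge\tau$.

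\emph{Tail integral.} Because $N$ vanishes for $t>1$, Cauchy--Schwarz gives
\[
\int_\tau^1\sqrt{N(t)}\dd t=\int_\tau^1\sqrt{tN(t)}\,t^{-1/2}\dd t\le\Bigl(\int_\tau^1tN\dd t\Bigr)^{1/2}\Bigl(\int_\tau^1\frac{\dd t}{t}\Bigr)^{1/2}\le\sqrt{\frac{1-m}{2}}\sqrt{\log(1/\tau)}.
\]
Since $\log(1/\tau)=\tfrac12\log n-\log K$, the right-hand side is $\bigl(\tfrac12+o(1)\bigr)\sqrt{(1-m)\log n}$.

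\emph{Combining.} Multiplying the bounds for $F$ and for the tail integral, and adding the small-$t$ piece, gives
\[
\int_0^\infty F\sqrt N\dd t\le\tfrac12\sqrt{m(1-m)}\,\sqrt{n\log n}+o(\sqrt{n\log n})\le\Bigl(\tfrac14+o(1)\Bigr)\sqrt{n\log n}.
\]
Every error term depends only on $n$ and $K$, not on $N$, so the $o(1)$ is uniform over all admissible $N$.

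The main obstacle is obtaining the sharp constant $1/4$ rather than some constant multiple of it. The naive estimates $F\le\tfrac32\sqrt n$ and $\int\sqrt N\le\tfrac12\sqrt{\log n}$ spend the full unit mass twice and only give $3/4$. The essential point is the bookkeeping with $m$: the same budget $\int2tN\le1$ must pay both for $F(\tau)$ (through $\sqrt{nm}$) and for the logarithmic tail (through $\sqrt{1-m}$), and the product is then maximized at $m=1/2$. Choosing $\tau$ slightly above $n^{-1/2}$ is what allows the leakage term $\int_\tau^\infty N$ to be absorbed into $o(\sqrt n)$ without losing any of the $\log n$ factor.
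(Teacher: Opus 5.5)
Your proof is correct and is essentially the paper's argument written in the original variable $t$ rather than in the logarithmic variable $u$ with $t=e^u/\sqrt n$. Your cutoff $\tau=Kn^{-1/2}$, the mass $m$, the bound $F(\tau)\le\sqrt{nm}$, the leakage estimate for $\int_\tau^\infty N$, and the Cauchy--Schwarz tail bound correspond to the paper's $A=\log K$, $\alpha$, $f(A)\le\sqrt\alpha$, the $g$-term, and the estimate producing $\sqrt{\alpha(1-\alpha)}$; the differences are minor (a single bathtub argument for $F(\tau)$ instead of splitting at $n^{-1/2}$, and the crude initial-segment bound $K^2\sqrt n/2$, which forces $K=o((\log n)^{1/4})$ but is harmless since the argument only needs $A\to\infty$ and $A=o(\log n)$).
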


\begin{proof}
Let $L=\log n$ and define $t(u)=e^u/\sqrt n$, $Q(u)=t(u)^2N(t(u))$ and $f(u)=F(t(u))/\sqrt n$.

\begin{claim}\label{claim:critical-change}
We have $Q(u)=0$ for $u>L/2$,
\[
        0\le Q(u)\le1,\qquad Q(u)\le e^{2u},\qquad \int_{-\infty}^{L/2}2Q(u)\dd u\le1,
\]
and
\[
        f(u)=\int_{-\infty}^uQ(v)e^{-v}\dd v,
        \qquad
        \frac1{\sqrt n}\int_0^\infty F(t)\sqrt{N(t)}\dd t
        =
        \int_{-\infty}^{L/2}f(u)\sqrt{Q(u)}\dd u.
\]
\end{claim}

\begin{proof}[Proof of Claim~\ref{claim:critical-change}]
Since $u>L/2$ implies $t(u)>1$ and $N(t)=0$ for $t>1$, we have $Q(u)=0$ for $u>L/2$. Since $N(t)\le t^{-2}$, one has $Q(u)=t(u)^2N(t(u))\le1$. Since $N(t)\le n$, one also has $Q(u)=t(u)^2N(t(u))\le nt(u)^2=e^{2u}$. The change of variables $t=e^u/\sqrt n$ gives
\[
        \int_{-\infty}^{L/2}2Q(u)\dd u
        =
        \int_0^1 2tN(t)\dd t
        \le1.
\]
The same change of variables gives
\[
        f(u)=\frac1{\sqrt n}\int_0^{t(u)}N(s)\dd s
        =
        \int_{-\infty}^uQ(v)e^{-v}\dd v.
\]
Finally, using $N(t)=0$ for $t>1$ and again changing variables by $t=e^u/\sqrt n$, we obtain
\[
        \frac1{\sqrt n}\int_0^\infty F(t)\sqrt{N(t)}\dd t
        =
        \frac1{\sqrt n}\int_0^1F(t)\sqrt{N(t)}\dd t
        =
        \int_{-\infty}^{L/2}f(u)\sqrt{Q(u)}\dd u.
\]
\end{proof}
\noindent By Claim~\ref{claim:critical-change}, it is enough to prove
\[
        \int_{-\infty}^{L/2}f(u)\sqrt{Q(u)}\dd u
        \le\left(\frac14+o(1)\right)\sqrt L.
\]
Choose $A=A_n\to\infty$ such that $A=o(L)$ and $e^{-A}L^{3/2}\to0$.

\begin{claim}\label{claim:critical-initial-segment}
We have
\[
        \int_{-\infty}^{A}f(u)\sqrt{Q(u)}\dd u=o(\sqrt L).
\]
\end{claim}

\begin{proof}[Proof of Claim~\ref{claim:critical-initial-segment}]
Since $0\le N(t)\le n$ and $\int_0^12tN(t)\dd t\le1$, splitting the integral at $n^{-1/2}$ gives
\[
        \int_0^1N(t)\dd t
        \le
        \int_0^{n^{-1/2}}n\dd t
        +
        n^{1/2}\int_{n^{-1/2}}^1tN(t)\dd t
        \le
        \frac32\sqrt n.
\]
Thus $f(u)\le3/2$ for all $u$. Therefore
\[
\begin{aligned}
\int_{-\infty}^{A}f(u)\sqrt{Q(u)}\dd u
&\le C\int_{-\infty}^{A}\sqrt{Q(u)}\dd u\\
&\le C\int_{-\infty}^{0}e^u\dd u+C\int_0^A\sqrt{Q(u)}\dd u\\
&\le C+C A^{1/2}\left(\int_0^AQ(u)\dd u\right)^{1/2}\\
&=O(\sqrt A)=o(\sqrt L).
\end{aligned}
\]
Here the second line uses $Q(u)\le e^{2u}$ for $u\le0$, and the third line uses Cauchy--Schwarz together with $\int_{-\infty}^{L/2}2Q(u)\dd u\le1$.
\end{proof}
\noindent It remains to estimate the contribution from $[A,L/2]$. Define
\[
        \alpha=\int_{-\infty}^A2Q(u)\dd u,\qquad
        \alpha_-=\int_{-\infty}^02Q(u)\dd u,\qquad
        \alpha_+=\int_0^A2Q(u)\dd u.
\]

\begin{claim}\label{claim:fA-bound}
We have $f(A)\le\sqrt\alpha$.
\end{claim}

\begin{proof}[Proof of Claim~\ref{claim:fA-bound}]
Since $Q(u)\le e^{2u}$ for $u\le0$, define $q(y)$ on $(0,1]$ by $Q(u)=y^2q(y)$ with $y=e^u$. Then $0\le q(y)\le1$, and
\[
        \int_{-\infty}^0Q(u)e^{-u}\dd u=\int_0^1q(y)\dd y,\qquad
        \alpha_-=\int_{-\infty}^02Q(u)\dd u=\int_0^12yq(y)\dd y.
\]
For every measurable $q:(0,1]\to[0,1]$, the rearrangement inequality
\[
        \int_0^12yq(y)\dd y\ge\left(\int_0^1q(y)\dd y\right)^2
\]
holds, since among all functions $0\le q\le1$ with fixed $\int_0^1q(y)\dd y=m$, the integral $\int_0^1yq(y)\dd y$ is minimized by concentrating the mass on $[0,m]$. Hence
\[
        \int_{-\infty}^0Q(u)e^{-u}\dd u\le\sqrt{\alpha_-}.
\]
Since $e^{-u}\le1$ for $u\ge0$, we also have
\[
        \int_0^AQ(u)e^{-u}\dd u\le\int_0^AQ(u)\dd u=\frac{\alpha_+}{2}.
\]
Consequently
\[
        f(A)=\int_{-\infty}^AQ(u)e^{-u}\dd u
        \le\sqrt{\alpha_-}+\frac{\alpha_+}{2}
        \le\sqrt{\alpha}.
\]
The last inequality follows from $\sqrt a+b/2\le\sqrt{a+b}$ for $a,b\ge0$ with $a+b\le1$.
\end{proof}
\noindent For $u\ge A$, write $f(u)=f(A)+g(u)$, where
\[
        g(u)=\int_A^uQ(v)e^{-v}\dd v.
\]
By Claim~\ref{claim:fA-bound} and Cauchy--Schwarz,
\[
\begin{aligned}
f(A)\int_A^{L/2}\sqrt{Q(u)}\dd u
&\le \sqrt\alpha\left(L/2-A\right)^{1/2}\left(\int_A^{L/2}Q(u)\dd u\right)^{1/2}\\
&\le \sqrt\alpha\left(L/2-A\right)^{1/2}\left(\frac{1-\alpha}{2}\right)^{1/2}\\
&=\sqrt{\alpha(1-\alpha)}\left(\frac{L/2-A}{2}\right)^{1/2}\\
&\le\left(\frac14+o(1)\right)\sqrt L.
\end{aligned}
\]
Furthermore,
\[
        g(u)=\int_A^uQ(v)e^{-v}\dd v\le e^{-A}\int_A^uQ(v)\dd v.
\]
Thus
\[
\begin{aligned}
\int_A^{L/2}g(u)\sqrt{Q(u)}\dd u
&\le e^{-A}\left(\int_A^{L/2}Q(v)\dd v\right)\left(\int_A^{L/2}\sqrt{Q(u)}\dd u\right)\\
&\le C e^{-A}\sqrt L=o(\sqrt L).
\end{aligned}
\]
Combining Claim~\ref{claim:critical-initial-segment} with the two estimates on $[A,L/2]$ gives
\[
        \int_{-\infty}^{L/2}f(u)\sqrt{Q(u)}\dd u
        \le\left(\frac14+o(1)\right)\sqrt L.
\]
Since $L=\log n$, Claim~\ref{claim:critical-change} implies
\[
        \int_0^\infty F(t)\sqrt{N(t)}\dd t
        \le\left(\frac14+o(1)\right)\sqrt{n\log n}.
\]
This proves the lemma.
\end{proof}

We shall also use the following standard concentration consequence of the matrix Bernstein inequality \cite{Tropp2015} in the lower bound for the supercritical
range $3/2<p<2$.

\begin{lemma}
\label{lem:blowup-spectral-concentration}
Let $r$ be fixed, let $a=(a_1,\ldots,a_r)$ be a positive probability vector, and let $K=(K_{ij})$ be a symmetric matrix with $0\le K_{ij}\le1$. For each integer $m$, let $n_i=\lfloor a_i m\rfloor$ and let $V_1,\ldots,V_r$ be disjoint vertex sets with $|V_i|=n_i$. Let $G_m$ be the random graph obtained by joining each pair $u\in V_i$, $v\in V_j$, $u\ne v$, independently with probability $K_{ij}$. Then, with probability tending to $1$, 
\[
        \lambda(G_m)
        =
        m\lambda_{\max}(D_a^{1/2}KD_a^{1/2})+o(m).
\]
\end{lemma}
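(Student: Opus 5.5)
We compare $A(G_m)$ with its expectation and control the fluctuation using the matrix Bernstein inequality.

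\textbf{Setup.} Let $N=\sum_i n_i$, so $N=m+O(r)$. Define the $N\times N$ block matrix $P$ by $P_{uv}=K_{ij}$ for $u\in V_i$, $v\in V_j$. Then $\mathbb E A(G_m)=P-\diag(P)$, since there are no loops. The diagonal correction has operator norm at most $1$. It therefore suffices to prove two things:
\begin{enumerate}[(i)]
\item $\lambda_{\max}(P)=m\lambda_{\max}(D_a^{1/2}KD_a^{1/2})+o(m)$;
\item $\|A(G_m)-\mathbb E A(G_m)\|=o(m)$ with probability tending to $1$.
\end{enumerate}
Weyl's inequality then combines these into the claim.

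\textbf{Step (i).} $P$ has rank at most $r$, and its nonzero spectrum equals the spectrum of the $r\times r$ matrix $D_n^{1/2}KD_n^{1/2}$, where $D_n=\diag(n_1,\ldots,n_r)$. To see this, write $P=BKB^{\mathsf T}$, where $B$ is the $N\times r$ block-indicator matrix, so $B^{\mathsf T}B=D_n$. The nonzero eigenvalues of $BKB^{\mathsf T}$ coincide with those of $KB^{\mathsf T}B=KD_n$, which is similar to $D_n^{1/2}KD_n^{1/2}$. Since $n_i/m=a_i+O(1/m)$ and $r$ is fixed, continuity of the largest eigenvalue of a fixed-size symmetric matrix gives $\lambda_{\max}(D_n^{1/2}KD_n^{1/2})=m\lambda_{\max}(D_a^{1/2}KD_a^{1/2})+O(1)$.

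\textbf{Step (ii).} Write
\[
A-\mathbb EA=\sum_{u<v}(\xi_{uv}-K_{uv})(E_{uv}+E_{vu}),
\]
where the $\xi_{uv}$ are independent Bernoulli variables and $E_{uv}$ are matrix units. The summands have the following properties:
\begin{itemize}
\item each is centered, symmetric, and has norm at most $1$;
\item the variance proxy satisfies $\bigl\|\sum_{u<v}\mathbb E X_{uv}^2\bigr\|=\max_u\sum_v K_{uv}(1-K_{uv})\le N$.
\end{itemize}
Matrix Bernstein \cite{Tropp2015} then gives
\[
\Pr\bigl(\|A-\mathbb EA\|\ge s\bigr)\le 2N\exp\!\left(\frac{-s^2/2}{N+s/3}\right).
\]
Taking $s=C\sqrt{N\log N}$ with $C$ large makes this probability $o(1)$, and $\sqrt{N\log N}=o(m)$.

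\textbf{Conclusion.} By Weyl's inequality,
\[
|\lambda(G_m)-\lambda_{\max}(P)|\le \|A-\mathbb EA\|+1=o(m)
\]
with probability tending to $1$. Combining this with step (i) proves the lemma.

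The only mildly delicate point is step (i): identifying the spectrum of the blow-up $P$ with that of the weighted $r\times r$ matrix, and controlling the rounding in $n_i=\lfloor a_im\rfloor$. Both are handled by the factorization argument above.
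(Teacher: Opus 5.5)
Your proposal is correct and follows essentially the same route as the paper: compare $\mathbb E A$ with the rank-$r$ block-constant matrix (the diagonal correction has norm at most $1$), identify its top eigenvalue with $\lambda_{\max}(D_n^{1/2}KD_n^{1/2})=m\lambda_{\max}(D_a^{1/2}KD_a^{1/2})+O(1)$, bound $\|A-\mathbb E A\|=o(m)$ by matrix Bernstein, and combine via Weyl's inequality. The differences are cosmetic: you use the explicit factorization $P=BKB^{\mathsf T}$ where the paper restricts to block-constant vectors, and you take the threshold $s=C\sqrt{N\log N}$ where the paper takes $m^{3/4}$.
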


\begin{proof}
Let $A$ be the adjacency matrix of $G_m$ and let $\overline A=\mathbb E A$. Let $B$ be the block-constant matrix defined by $B_{uv}=K_{ij}$ whenever $u\in V_i$ and $v\in V_j$, including the diagonal entries. Then $\|\overline A-B\|\le1$, since $\overline A$ differs from $B$ only on the diagonal.

The matrix $B$ has rank at most $r$. On the subspace of vectors that are constant on each block, it is represented by the matrix $KD_n$, where $D_n=\diag(n_1,\ldots,n_r)$. Since $KD_n$ is similar to $D_n^{1/2}KD_n^{1/2}$, we have
$
        \lambda_{\max}(B)
        =
        \lambda_{\max}(D_n^{1/2}KD_n^{1/2}).
$
As $n_i=a_im+O(1)$ and $K$ is fixed, Weyl's inequality gives
\[
        \lambda_{\max}(B)
        =
        m\lambda_{\max}(D_a^{1/2}KD_a^{1/2})+O(1).
\]
Hence
\[
        \lambda_{\max}(\overline A)
        =
        m\lambda_{\max}(D_a^{1/2}KD_a^{1/2})+O(1).
\]

It remains to show that $\|A-\overline A\|=o(m)$ with high probability.
Write
$
        A-\overline A=\sum_{u<v}X_{uv},
        $, where $
        X_{uv}=(A_{uv}-\mathbb E A_{uv})(E_{uv}+E_{vu}).
$
The matrices $X_{uv}$ are independent, symmetric, mean-zero, and satisfy $\|X_{uv}\|\le1$. Moreover,
$
        X_{uv}^2=(A_{uv}-\mathbb E A_{uv})^2(E_{uu}+E_{vv}),
$
and therefore,
\[\begin{aligned}
        \left\|\sum_{u<v}\mathbb E X_{uv}^2\right\|
        &=\left\|\sum_{u<v}\mathbb E (A_{uv}-\mathbb E A_{uv})^2(E_{uu}+E_{vv})\right\|
        \\&=\left\|\sum_{u<v}B_{uv}(1-B_{uv})(E_{uu}+E_{vv})\right\|
        \\&=\left\|\sum_{u}\sum_{v\neq u}B_{uv}(1-B_{uv})E_{uu}\right\|
        \\&\le n/4=O(m),
        \qquad n=\sum_i n_i.
\end{aligned}\]
By the matrix Bernstein inequality for self-adjoint sums, for every $t>0$,
\[
        \mathbb P\bigl(\|A-\overline A\|\ge t\bigr)
        \le
        2n\exp\left(-\frac{t^2/2}{Cm+t/3}\right)
\]
for some constant $C=C(r,K)$. Taking $t=m^{3/4}$ gives
\[
        \mathbb P\bigl(\|A-\overline A\|\ge m^{3/4}\bigr)=o(1).
\]
Thus $\|A-\overline A\|=o(m)$ with high probability. Then by Weyl's inequality, we have 
\[
        \lambda_{\max}(A)
        =
        \lambda_{\max}(\overline A)+o(m)
        =
        m\lambda_{\max}(D_a^{1/2}KD_a^{1/2})+o(m),
\]
as required.
\end{proof}

\section{Proof of the main theorem and finite consequences}

\begin{proof}[Proof of Theorem~\ref{thm:graphon}]
If $\Delta_p(W)=0$, then taking $A=\Omega$ gives $\int_{\Omega^2}W=0$. Since $W\ge0$, this implies $W=0$ almost everywhere. Assume $\Delta_p(W)>0$. By homogeneity, replace $W$ by $W/\Delta_p(W)$ and suppose $\Delta_p(W)\le1$. Put $R=\max\{1,\|W\|_\infty\}$. Let $f\in L^2(\Omega)$ be nonnegative with $\|f\|_2=1$, define $S_t=\{x:f(x)\ge t\}$, $N(t)=\mu(S_t)$, and $F(t)=\int_0^tN(s)\dd s$. By Lemma~\ref{lem:layercake} and symmetry,
\[
\langle T_Wf,f\rangle=2\int_0^\infty\int_0^t\int_{S_t\times S_s}W\dd\mu^2\dd s\dd t.
\]
For $s\le t$ we have $S_t\subseteq S_s$, so Lemma~\ref{lem:nested-incidence} gives
\[
\int_{S_t\times S_s}W\le C_pN(s)\min\{RN(t),N(t)^{p-1}\}=C_pN(s)\phi_R(N(t)).
\]
Consequently,
\[
\langle T_Wf,f\rangle\le C_p\int_0^\infty F(t)\phi_R(N(t))\dd t.
\]
Since $\int_0^\infty2tN(t)\dd t=\|f\|_2^2=1$, Lemma~\ref{lem:hardy} gives $\langle T_Wf,f\rangle\le C_p\Phi_p(R)$. Taking the supremum over nonnegative unit vectors in $L^2(\Omega)$ and then rescaling by $\Delta_p(W)$ proves the theorem, because $\max\{1,\|W\|_\infty/\Delta_p(W)\}\le1+\|W\|_\infty/\Delta_p(W)$ and $\Phi_p$ is nondecreasing.
\end{proof}

\begin{corollary}\label{cor:Cp-finite}
For every $3/2<p<2$, one has $0<\mathfrak C_p<\infty$.
\end{corollary}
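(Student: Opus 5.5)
Positivity is immediate. Take $r=1$, $a=(1)$, $K=(1)$. Then $\lambda_{\max}(D_a^{1/2}KD_a^{1/2})=1$, and the denominator is $\max_{0<s\le1}s^{2-p}=1$. So $\mathfrak C_p\ge2>0$.

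For finiteness, I plan to fix an admissible triple $(r,a,K)$ and view it as a step kernel on the atomless space $[0,1]$. Partition $[0,1]$ into intervals $J_1,\ldots,J_r$ with $|J_i|=a_i$, and set $W(x,y)=K_{ij}$ on $J_i\times J_j$. Then $T_W$ maps into the functions constant on each $J_i$. On that subspace, with the orthonormal basis $a_i^{-1/2}\1_{J_i}$, it is represented by $D_a^{1/2}KD_a^{1/2}$. Since $K\ge0$, this gives $\|T_W\|_{2\to2}=\lambda_{\max}(D_a^{1/2}KD_a^{1/2})$.

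Next I identify $\Delta_p(W)$ with the denominator. For measurable $A$, write $s_i=\mu(A\cap J_i)\in[0,a_i]$. Then $\int_{A\times A}W=s^{\mathsf T}Ks$ and $\mu(A)=\sum_is_i$. Every vector $s$ with $0\le s_i\le a_i$ arises this way. Hence $\Delta_p(W)$ equals exactly the maximum in the denominator of Definition~\ref{def:Cp}. This maximum is attained because the quotient is bounded near $s=0$: $s^{\mathsf T}Ks\le\max K\,(\sum s_i)^2$ and $2>p$. It is positive since $K\ne0$.

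The main step is to remove the dependence on the height, and here I apply Theorem~\ref{thm:graphon} to $W$. Its bound carries the factor $\Phi_p(1+\|W\|_\infty/\Delta_p(W))$. The key observation is that $\Phi_p\equiv1$ for $3/2<p<2$, so the height does not enter at all. Theorem~\ref{thm:graphon} then gives $\lambda_{\max}(D_a^{1/2}KD_a^{1/2})\le C_p\Delta_p(W)$. Therefore every quotient in the supremum is at most $2C_p$, with $C_p$ depending only on $p$, and so $\mathfrak C_p\le2C_p<\infty$.

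The only potential obstacle is checking that the hypotheses of Theorem~\ref{thm:graphon} hold uniformly over all $(r,a,K)$. These are boundedness, $0<\Delta_p(W)<\infty$, and atomlessness. All are clear for finite step kernels on $[0,1]$, and the constant $C_p$ does not depend on $r$, $a$ or $K$.
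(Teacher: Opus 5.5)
Your proof is correct and matches the paper's argument step for step: positivity from the one-atom kernel, then realizing $(a,K)$ as a step kernel $W$ on $[0,1]$ with $\|T_W\|_{2\to2}=\lambda_{\max}(D_a^{1/2}KD_a^{1/2})$ and $\Delta_p(W)$ equal to the denominator, and finally applying Theorem~\ref{thm:graphon} with $\Phi_p\equiv1$ to get $\mathfrak C_p\le 2C_p$. One small wording point: the maximum is attained because the quotient tends to $0$ as $s\to0$, which your bound $s^{\mathsf T}Ks\le\max K\,(\sum_i s_i)^2$ does show; being merely bounded near $0$ would not be enough.
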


\begin{proof}
Positivity follows by taking a one-atom kernel. We next check that the denominator in Definition~\ref{def:Cp} is a positive maximum for every admissible pair $(a,K)$. Since $K\ne0$ and $K$ is nonnegative, there exists some $s$ with $0\le s_i\le a_i$ and $s^{\mathsf T}Ks>0$. Since $2-p>0$, if $0<s_i<a_i$ for every nonzero coordinate of $s$, then replacing $s$ by $qs$ for some $q>1$ with $qs_i\le a_i$ increases the value of $s^{\mathsf T}Ks/(\sum_i s_i)^p$ by the factor $q^{2-p}$. Hence the supremum is attained on the compact set where at least one active coordinate reaches its upper bound. Equivalently, one may write
\[
        \max_{\substack{0\le s_i\le a_i\\ \sum_i s_i>0}}
        \frac{s^{\mathsf T}Ks}{(\sum_i s_i)^p}
        =
        \max_{\substack{0\le s_i\le a_i\\ \sum_i s_i\ge \min_i a_i}}
        \frac{s^{\mathsf T}Ks}{(\sum_i s_i)^p}>0.
\]
For finiteness, let $(a,K)$ be any finite kernel and realize it as a step kernel $W$ on $[0,1]$ with atom intervals of lengths $a_i$ and values $K_{ij}$. Then $$\|T_W\|_{2\to2}=\lambda_{\max}(KD_a)=\lambda_{\max}(D_a^{1/2}KD_a^{1/2})$$
and $\Delta_p(W)=\max_{0\le s_i\le a_i,\ \sum_i s_i>0}s^{\mathsf T}Ks/(\sum_i s_i)^p$. Since $p>3/2$, Theorem~\ref{thm:graphon} gives $\|T_W\|_{2\to2}\le C_p\Delta_p(W)$, and therefore every quotient in Definition~\ref{def:Cp} is at most $2C_p$.
\end{proof}

\begin{proof}[Proof of Theorem~\ref{thm:main} for $1<p<3/2$]
The lower bound is given by the star. For $G=K_{1,n-1}$, one has $\lambda(G)=\sqrt{n-1}$, and every nonempty induced subgraph with edges is a star with $t$ leaves. Hence
\[
d_p(K_{1,n-1})=\max_{1\le t\le n-1}\frac{t}{(t+1)^p}\to M_p,\qquad M_p=\max_{t\in\mathbb N_{\ge1}}\frac{t}{(t+1)^p}.
\]
The continuous function $t/(t+1)^p$ has derivative $(t+1)^{-p-1}(1-(p-1)t)$, so its continuous maximum is at $t=1/(p-1)$. It follows that the integer maximum is attained by one of $\floor{1/(p-1)}$ and $\ceil{1/(p-1)}$.

We prove the matching upper bound. Let $G$ be an $n$-vertex graph with $e(G)>0$, set $D=d_p(G)$, and let $x\ge0$ be a Perron unit vector of $G$. Fix $\eps>0$ and put $H=\{v:x_v>\eps\}$ and $T=V(G)\setminus H$. Then $|H|\le\eps^{-2}$, and
\[
\lambda(G)=x_H^{\mathsf T}A_Hx_H+2x_H^{\mathsf T}A_{H,T}x_T+x_T^{\mathsf T}A_Tx_T.
\]
The first term is $O_\eps(1)$, and after division by $D\sqrt n$ it tends to $0$, since $G$ contains an induced edge and therefore $D=d_p(G)\ge 2^{-p}$. By Lemma~\ref{lem:tail}, the third term is at most $C_pD(\eps^{3-2p}\sqrt n+n^{p-1})$, so its contribution to $\lambda(G)/(D\sqrt n)$ is at most $C_p\eps^{3-2p}+o(1)$.

It remains to estimate the middle term. Let $B=A_{H,T}$. Classify vertices $v\in T$ according to their neighborhood $R(v)=N(v)\cap H$; that is, for each $Q\subseteq H$, a vertex $v\in T$ is said to be of type $Q$ if $R(v)=Q$. Let $t_*$ be an integer for which $M_p=t_*/(t_*+1)^p$. A type $R$ is called rare if fewer than $|R|t_*$ vertices of $T$ have neighborhood $R$. 
The total number of rare vertices is at most
\[
        t_*\sum_{R\subseteq H}|R|=t_*|H|2^{|H|-1}=O_{\eps,p}(1).
\]
Let $B_{\rm rare}$ be the submatrix of $B$ formed by the rare columns. Since each such column has at most $|H|$ ones, and since $|H|\le \eps^{-2}$, we have
\[
        \|B_{\rm rare}\|_F^2
        \le |H|\cdot(\text{number of rare vertices})
        =O_{\eps,p}(1).
\]
Therefore $\|B_{\rm rare}\|\le \|B_{\rm rare}\|_F=O_{\eps,p}(1)$.

By the triangle inequality and the bound on $\|B_{\rm rare}\|$, discarding these rare columns changes $\|B\|$ by only $O_{\eps,p}(1)$. Let $B_{\text{remain}}$ denote the matrix obtained from $B$ after discarding the rare columns. For the remaining types, let $q_*=\max |R|$. If $q_*>0$, choose a nonrare type $R$ with $|R|=q_*$. Since this type is nonrare, there are at least $q_*t_*$ vertices $v\in T$ with $N(v)\cap H=R$. The induced subgraph on $R$ together with any $q_*t_*$ of these vertices contains $K_{q_*,q_*t_*}$ as a subgraph, and hence has $q_*(t_*+1)$ vertices and at least $q_*^2t_*$ edges.
Therefore $D\ge q_*^{2-p}M_p$. On the other hand, for every unit vector $u\in\mathbb R^H$,
\[
u^{\mathsf T}B_{\text{remain}}B_{\text{remain}}^{\mathsf T}u=\sum_{v\in T}\left(\sum_{i\in R(v)}u_i\right)^2\le\sum_{v\in T}|R(v)|\sum_{i\in R(v)}u_i^2\le q_*n.
\]
Thus $\|B\|\le\|B_{\text{remain}}\|+\|B_{\text{rare}}\|=\sqrt{q_*n}+O_{\eps,p}(1)$, and
\[
2x_H^{\mathsf T}Bx_T\le2\|x_H\|_2\|x_T\|_2\|B\|\le\|B\|\le\frac{D}{M_p}q_*^{p-3/2}\sqrt n+O_{\eps,p}(1)\le\frac{D}{M_p}\sqrt n+O_{\eps,p}(1),
\]
because $p<3/2$. We have shown
\[
\lambda(G)\le\frac{D}{M_p}\sqrt n+C_pD\eps^{3-2p}\sqrt n+o(D\sqrt n).
\]
Letting $n\to\infty$ and then $\eps\to0$ gives $\limsup_{n\to\infty}\Lambda_p(n)/\sqrt n\le1/M_p$. Together with the star lower bound, this proves the assertion for $1<p<3/2$.
\end{proof}

\begin{proof}[Proof of Theorem~\ref{thm:main} for $p=3/2$]
We first prove the upper bound. Let $G$ be an $n$-vertex graph, let $D=d_{3/2}(G)$, and let $x\ge0$ be a Perron unit vector. For $t>0$, set $S_t=\{v:x_v\ge t\}$, $N(t)=|S_t|$ and $F(t)=\int_0^tN(s)\dd s$. If $A,C$ are disjoint, $|A|=a$ and $|C|=c$, then
\[
e(A,C)\le\frac{3\sqrt3}{2}D(a+c)\sqrt a.
\]
Indeed, if $c\ge2a$, choose uniformly a subset $T\subseteq C$ of size $2a$. Then $(2a/c)e(A,C)=\mathbb E e(A,T)\le D(3a)^{3/2}$, which gives $e(A,C)\le(3\sqrt3/2)Dc\sqrt a$. If $c<2a$, then $e(A,C)\le e(A\cup C)\le D(a+c)^{3/2}\le(3\sqrt3/2)D(a+c)\sqrt a$.

The layer-cake formula for $x^{\mathsf T}Ax$ gives
\[
\lambda(G)=2\int_0^\infty\int_0^t\cE(S_t,S_s)\dd s\dd t.
\]
For $s\le t$, the preceding incidence estimate and $e(S_t)\le D N(t)^{3/2}$ imply
\[
\cE(S_t,S_s)=2e(S_t)+e(S_t,S_s\setminus S_t)\le 2D N(t)^{3/2}+\frac{3\sqrt3}{2}DN(s)\sqrt{N(t)}.
\]
Consequently,
\[
\lambda(G)\le4D\int_0^\infty tN(t)^{3/2}\dd t+3\sqrt3D\int_0^\infty F(t)\sqrt{N(t)}\dd t.
\]
Since $N(t)\le\min(n,t^{-2})$ and since $\|x\|_2=1$ implies $x_v\le1$ for every vertex, we have $N(t)=0$ for $t>1$. Therefore, 
$$\begin{aligned}\int_0^\infty tN(t)^{3/2}\dd t&=\int_0^{n^{-1/2}} tN(t)^{3/2}\dd t+\int_{n^{-1/2}}^1 tN(t)^{3/2}\dd t
\\&\leq\int_0^{n^{-1/2}} tn^{3/2}\dd t+\int_{n^{-1/2}}^1 t^{-2}\dd t
\\&=\frac{\sqrt{n}}{2}+\sqrt n-1=O(\sqrt n)\end{aligned}$$
Moreover, Lemma~\ref{lem:critical-hardy} bounds the second integral by $(1/4+o(1))\sqrt{n\log n}$. Thus
\[
\lambda(G)\le\left(\frac{3\sqrt3}{4}+o(1)\right)D\sqrt{n\log n}.
\]

We now prove the matching lower bound. Fix $h,\beta,\eta>0$ and let $N$ tend to infinity. Let $B$ be a set of $N$ vertices and let $L=\floor{(1-\eta)\log N/h}$. For $0\le i<L$, let $H_i$ be disjoint sets with $m_i=|H_i|=\floor{\beta N(1-e^{-h})e^{-ih}}$. Then $\sum_i m_i=(\beta+o(1))N$ and $m_{L-1}=N^{\eta+o(1)}$. Choose a sequence $c_N$ such that $c_N/\log N\to\infty$ and $c_N^2h=o(m_{L-1})$; for instance $c_N=(\log N)^2$ works for fixed $h,\beta,\eta$. Between $B$ and $H_i$, put independent edges with probability $q_i=c_N\sqrt h/\sqrt{m_i}$, and put no other edges. The condition $c_N^2h=o(m_{L-1})$ ensures that $q_i\le1$ for all large $N$. Let $W_2=Lh=(1-\eta+o(1))\log N$. 
For each $0\le i<L$, the random variable $e(B,H_i)$ is binomial with mean value
\[
        \mu_i=Nm_iq_i=c_NN\sqrt h\,\sqrt{m_i}.
\]
Since $m_i\ge m_{L-1}$, we have $\mu_i\ge\mu_*:=c_NN\sqrt h\,\sqrt{m_{L-1}}$. Choose $\delta=(\log N)^{-1}$. By Chernoff's inequality,
\[
\Pr\left(\left|e(B,H_i)-\mu_i\right|>\delta\mu_i\right)
\le
2\exp\left(-\frac{\delta^2\mu_i}{3}\right)
\le
2\exp\left(-\frac{\delta^2\mu_*}{3}\right).
\]
Since $m_{L-1}=N^{\eta+o(1)}$, $c_N/\log N\to\infty$ and $L=O(\log N)$, it follows that
$
        L\exp\left(-\frac{\delta^2\mu_*}{3}\right)=o(1).
$
Therefore, by the union bound, with probability $1-o(1)$ the estimates
$
        |e(B,H_i)-\mu_i|\le\delta\mu_i
$
hold simultaneously for all $0\le i<L$. Since $\delta=o(1)$, we obtain, uniformly for all $0\le i<L$,
\[
        e(B,H_i)=(1+o(1))Nm_iq_i=(1+o(1))c_NN\sqrt h\,\sqrt{m_i}.
\]
On this event, define $z_b=(2N)^{-1/2}$ for $b\in B$ and $z_v=\sqrt h/(2W_2m_i)^{1/2}$ for $v\in H_i$. Then $\|z\|_2=1$ and
\[
z^{\mathsf T}Az=(1+o(1))c_N\sqrt{NW_2}\ge(1-o(1))c_N\sqrt{(1-\eta)N\log N}.
\]
It remains to bound $d_{3/2}$ uniformly over all vertex subsets. Let $S=X\cup\bigcup_iY_i$ with $X\subseteq B$ and $Y_i\subseteq H_i$. Put $x=|X|$, $y_i=|Y_i|$, $Y=\sum_i y_i$ and $s=x+Y=|S|$. The expectation $\mu(S)=\mathbb E e(S)$ is
\[
\mu(S)=c_Nx\sum_i\frac{\sqrt h\,y_i}{\sqrt{m_i}}.
\]
Define
\[
T_{h,N}=\sup_{\substack{0\le y_i\le m_i\\ \sum_i y_i>0}}\frac{\sum_i\sqrt h\,y_i/\sqrt{m_i}}{\sqrt{\sum_i y_i}}.
\]
We justify the evaluation of $T_{h,N}$. Let $a_i=\sqrt h/\sqrt{m_i}$. Then $a_i$ is increasing in $i$. For a fixed value of $Y=\sum_i y_i$, the linear functional $\sum_i a_iy_i$ is maximized by filling the layers with largest $a_i$, equivalently by taking full layers from the tail $i=L-1,L-2,\ldots$, with at most one partially filled next layer. A partially filled layer cannot give a larger value than one of the two adjacent full-layer choices, since $(A+\theta c)/(B+\theta d)^{1/2}$ has no interior maximum for $0\le\theta\le1$. Thus it is enough to consider $k$ full tail layers. Since $m_i=(1+o(1))\beta N(1-e^{-h})e^{-ih}$ up to a multiplicative $1+o(1)$ error, uniformly because $m_{L-1}\to\infty$,
for such a choice,
\[
\frac{\sum_i\sqrt h\,y_i/\sqrt{m_i}}{\sqrt{\sum_i y_i}}=(1+o(1))\sqrt h\,\frac{1-e^{-kh/2}}{1-e^{-h/2}}\left(\frac{1-e^{-h}}{1-e^{-kh}}\right)^{1/2}\le(1+o(1))\sqrt h\,\frac{\sqrt{1-e^{-h}}}{1-e^{-h/2}}.
\]
The last inequality follows from
\[
        \frac{1-e^{-kh/2}}{\sqrt{1-e^{-kh}}}
        =
        \left(\frac{1-e^{-kh/2}}{1+e^{-kh/2}}\right)^{1/2}
        \le 1.
\]
Since this factor is $1+o_h(1)$ as $k\to\infty$, taking $k\to\infty$ with $k\le L$ gives the reverse inequality up to $1+o(1)$, and therefore
\[
T_{h,N}=(1+o(1))T_h,\qquad T_h=\sqrt h\,\frac{\sqrt{1-e^{-h}}}{1-e^{-h/2}},
\]
as $N\to\infty$ with $h$ fixed.
Hence
\[
\mu(S)\le(1+o(1))c_NT_hx\sqrt Y\le(1+o(1))c_NT_h\frac{2}{3\sqrt3}s^{3/2},
\]
where the last inequality is the sharp maximum of $x\sqrt Y/(x+Y)^{3/2}$. Fix $\gamma>0$ and put $R_s=(1+\gamma)c_NT_h(2/(3\sqrt3))s^{3/2}$. For all large $N$, $R_s\ge(1+\gamma/2)\mu(S)$ for every $S$ of size $s$. The random variable $e(S)$ is a sum of independent Bernoulli variables, so Chernoff's inequality gives $\mathbb P(e(S)\ge R_s)\le\exp(-c_\gamma c_Ns^{3/2})$. 
The total number of vertex subsets of size $s$ is at most $(C_{\beta,h}N)^s$. Since $c_N/\log N\to\infty$, for all sufficiently large $N$ we have
$
        \log(C_{\beta,h}N)\le \frac{c_\gamma}{2}c_N .
$
Therefore, by the union bound and the estimate $s^{3/2}\ge s$ for $s\ge1$,
\[
\begin{aligned}
\mathbb P\left(\exists S:e(S)>(1+\gamma)c_NT_h\frac{2}{3\sqrt3}|S|^{3/2}\right)
&\le \sum_{s\ge1} (C_{\beta,h}N)^s\exp\left(-c_\gamma c_Ns^{3/2}\right)\\
&= \sum_{s\ge1}\exp\left(s\log(C_{\beta,h}N)-c_\gamma c_Ns^{3/2}\right)\\
&\le \sum_{s\ge1}\exp\left(-\frac{c_\gamma}{2}c_Ns\right)=o(1).
\end{aligned}
\]
Intersecting this event with the earlier simultaneous concentration of the layer edge counts still gives probability $1-o(1)$. Since $\gamma>0$ is arbitrary, with probability $1-o(1)$,
\[
e(S)\le(1+o(1))c_N\left(\sqrt h\,\frac{\sqrt{1-e^{-h}}}{1-e^{-h/2}}\right)\frac{2}{3\sqrt3}|S|^{3/2}
\]
for every $S$. Hence
\[
d_{3/2}(G)\le(1+o(1))c_N\left(\sqrt h\,\frac{\sqrt{1-e^{-h}}}{1-e^{-h/2}}\right)\frac{2}{3\sqrt3}.
\]
The total number of vertices is $n=(1+\beta+o(1))N$. Therefore
\[
\frac{\lambda(G)}{d_{3/2}(G)\sqrt{n\log n}}\ge\frac{z^{\mathsf T}Az}{d_{3/2}(G)\sqrt{n\log n}}\ge (1-o(1))\frac{\sqrt{1-\eta}}{\sqrt{1+\beta}}\frac{3\sqrt3}{2}\left(\sqrt h\,\frac{\sqrt{1-e^{-h}}}{1-e^{-h/2}}\right)^{-1}.
\]
Letting $N\to\infty$ and then $\eta,\beta,h\to0$ gives the lower bound $3\sqrt3/4$.
\end{proof}

\begin{proof}[Proof of Theorem~\ref{thm:main} for $3/2<p<2$]

We prove first that
$
        \liminf_{n\to\infty}\frac{\Lambda_p(n)}{n^{p-1}} \ge \mathfrak C_p.
$
Fix $r,a,K$ as in Definition~\ref{def:Cp}. By homogeneity we may assume $0\le K_{ij}\le1$. For a large integer $m$, let $G_m$ be the random blow-up associated with $(a,K)$ as in Lemma~\ref{lem:blowup-spectral-concentration}. Thus, with probability tending to $1$,
\[
        \lambda(G_m)
        =
        m\lambda_{\max}(D_a^{1/2}KD_a^{1/2})+o(m).
\]

We next provide the uniform density estimate. For any subset $S$, write $s_i=|S\cap V_i|/m$ and $\sigma=\sum_i s_i$. Fix $\tau>0$. Hoeffding's inequality with an additive error $\varepsilon m^2$, followed by a union bound over all subsets, gives uniformly for all $S$ with $|S|\ge\tau m$ that
\[
2e(S)=m^2s^{\mathsf T}Ks+o(m^2).
\]
Consequently, uniformly in this range,
\[
\frac{e(S)}{|S|^p}=\frac12m^{2-p}\frac{s^{\mathsf T}Ks}{\sigma^p}+o(m^{2-p}).
\]
For $|S|<\tau m$ one has the deterministic bound $e(S)/|S|^p\le |S|^{2-p}/2\le \tau^{2-p}m^{2-p}/2$. Taking first $m\to\infty$ and then $\tau\downarrow0$, and using subsets with prescribed proportions to get the matching lower bound, yields
\[
d_p(G_m)=\frac12m^{2-p}\max_{\substack{0\le s_i\le a_i\\ \sum_i s_i>0}}\frac{s^{\mathsf T}Ks}{(\sum_i s_i)^p}+o(m^{2-p})
\]
with probability tending to $1$. Hence
\[
\frac{\lambda(G_m)}{d_p(G_m)|V(G_m)|^{p-1}}\to\frac{2\lambda_{\max}(D_a^{1/2}KD_a^{1/2})}{\max_{0\le s_i\le a_i,\ \sum_i s_i>0}\dfrac{s^{\mathsf T}Ks}{(\sum_i s_i)^p}}.
\]
Taking the supremum over $r,a,K$ proves the lower bound.

We now prove the upper bound. Let $G=G_n$ be an $n$-vertex graph, set $D=d_p(G)$ and let $x\ge0$ be a Perron unit vector. If $\lambda(G)=o(Dn^{p-1})$, then the desired estimate is immediate. Otherwise, along a subsequence, $\lambda(G)\ge cDn^{p-1}$ for some fixed $c>0$. Since $\lambda x_v=(Ax)_v\le\sqrt{d(v)}\le\sqrt n$ and $D\ge2^{-p}$, we have $\|x\|_\infty\le Cn^{3/2-p}=o(1)$. Choose $M=M_n\to\infty$ with $M\|x\|_\infty^2\to0$ and $M=o(n)$. Order the vertices so that $x_1\ge x_2\ge\cdots\ge x_n$, and partition them into consecutive blocks $P_1,\ldots,P_r$ of sizes between $M$ and $2M$, except possibly the last block, which is joined to the preceding block if necessary. Let $y$ be the vector equal to the average of $x$ on each block. By Lemma~\ref{lem:block-averaging}, $\|x-y\|_2=o(1)$. Since $\|A(G)\|=\lambda(G)$, we have $$|x^{\mathsf T}Ax-y^{\mathsf T}Ay|=|x^{\mathsf T}A(x-y)+(x-y)^{\mathsf T}Ay|\le\lambda(G)\|x-y\|_2(\|x\|_2+\|y\|_2)=o(\lambda(G)),$$ and hence $y^{\mathsf T}Ay=(1-o(1))\lambda(G)$ and $\|y\|_2=1+o(1)$.

Set $a_i=|P_i|/n$ and define $K=(K_{ij})$ by $K_{ij}=e(P_i,P_j)/(|P_i||P_j|)$ for $i\ne j$ and $K_{ii}=2e(P_i)/|P_i|^2$. If $y$ takes the value $\xi_i$ on $P_i$ and $u_i=\sqrt{na_i}\xi_i$, then $\sum_i u_i^2=\|y\|_2^2$ and
\[
\frac{y^{\mathsf T}Ay}{n}=u^{\mathsf T}D_a^{1/2}KD_a^{1/2}u.
\]
Therefore
\[
\frac{\lambda(G)}{n}=(1+o(1))\frac{y^{\mathsf T}Ay}{n\|y\|_2^2}=(1+o(1))\frac{u^{\mathsf T}D_a^{1/2}KD_a^{1/2}u}{\|u\|_2^2}\leq(1+o(1))\lambda_{\max}(D_a^{1/2}KD_a^{1/2}).
\]
We next compare the density functional with $D$. Let $0\le s_i\le a_i$ and $\sigma=\sum_i s_i>0$. Since the quotient $s^{\mathsf T}Ks/\sigma^p$ is homogeneous of degree $2-p>0$, it suffices to consider a maximizer for which at least one coordinate is saturated. For such a maximizer, $\sigma n\ge M$. In each block $P_i$, choose vertices independently with probability $\theta_i=s_i/a_i$, and let $R$ be the resulting random set. Then $\mathbb E|R|=\sigma n\ge M$ and, by the definition of $K$,
\[
\mathbb E\,2e(R)=n^2s^{\mathsf T}Ks
\]
exactly. Since $2e(R)\le2D|R|^p$ for every outcome and $1<p<2$, one has
\[
\mathbb E|R|^p\le(\mathbb E|R|^2)^{p/2}\le\left((\sigma n)^2+\sigma n\right)^{p/2}=(1+o(1))(\sigma n)^p,
\]
uniformly for $\sigma n\ge M$. Hence
\[
s^{\mathsf T}Ks\le(2+o(1))Dn^{p-2}\sigma^p.
\]
Taking the maximum over $s$ gives
\[
\max_{\substack{0\le s_i\le a_i\\ \sum_i s_i>0}}\frac{s^{\mathsf T}Ks}{(\sum_i s_i)^p}\le(2+o(1))Dn^{p-2}.
\]
Combining this estimate with the spectral bound yields
\[
\frac{\lambda(G)}{Dn^{p-1}}\le(1+o(1))\frac{2\lambda_{\max}(D_a^{1/2}KD_a^{1/2})}{\max_{0\le s_i\le a_i,\ \sum_i s_i>0}\dfrac{s^{\mathsf T}Ks}{(\sum_i s_i)^p}}\le(1+o(1))\mathfrak C_p.
\]
Taking the limsup proves $\limsup\Lambda_p(n)/n^{p-1}\le\mathfrak C_p$.
\end{proof}

\begin{proof}[Proof of Theorem~\ref{thm:main} for $p=2$]
Let $D=d_2(G)$ and let $\omega=\omega(G)$ be the clique number. The clique $K_\omega$ gives $D\ge\binom{\omega}{2}/\omega^2=(1-1/\omega)/2$. Wilf's inequality \cite{Wilf1986} states that $\lambda(G)\le(1-1/\omega(G))n$. Hence $\lambda(G)\le2Dn$. For $G=K_n$, one has $d_2(K_n)=\binom n2/n^2=(n-1)/(2n)$ and $\lambda(K_n)=n-1=2d_2(K_n)n$, so $\Lambda_2(n)=2n$.
\end{proof}

\begin{proof}[Proof of Corollary~\ref{cor:finite-orders}]
This is Theorem~\ref{thm:main} rewritten under the hypothesis $d_p(G)\le D$. The endpoint $p=2$ is finite-$n$ exact, while the other three regimes are asymptotic and the constants are attained by the constructions used in the proof of Theorem~\ref{thm:main}.
\end{proof}

\section{Concluding remarks}

Theorem~\ref{thm:main} resolves Guiduli's power-law hereditary density problem for all $1<p\le2$ in sharp asymptotic form. The only leading constant not given by an elementary expression is the supercritical constant $\mathfrak C_p$, which is characterized by an exact variational formula over finite kernels. It would be natural to determine whether this variational problem admits a closed-form solution, or whether its near-extremizers have a canonical finite-kernel or graph-limit structure.

The proof also shows that the transition at $p=3/2$ is not an artefact of the method. Below the transition point, the extremal examples are essentially star-like; at the transition point, a multiscale construction is needed; above the transition point, blow-ups of finite kernels determine the leading constant. Finding other hereditary spectral problems with the same trichotomy seems to be an interesting direction for future work.

\section*{Acknowledgment}
The work is partly  supported by National Natural Science Foundation of China (Nos. 12371354, W2521102), the Science and Technology Commission of Shanghai Municipality (No.25LN3200600) and the Montenegrin-Chinese Science and Technology Cooperation Project (No. 4-3).

\bibliographystyle{plain}
\bibliography{0601Guiduli}

@phdthesis{Guiduli1996,
  author = {Guiduli, Barry},
  title = {Spectral Extrema for Graphs},
  school = {University of Chicago},
  year = {1996}
}

@article{BabaiGuiduli2009,
  author = {Babai, L{\'a}szl{\'o} and Guiduli, Barry},
  title = {Spectral Extrema for Graphs: The {Z}arankiewicz Problem},
  journal = {Electronic Journal of Combinatorics},
  volume = {16},
  number = {1},
  pages = {Research Paper R123, 8 pp.},
  year = {2009},
  doi = {10.37236/212}
}

@article{LiWangZhai2024,
  author = {Li, Rui and Wang, Anyao and Zhai, Mingqing},
  title = {Spectral Radius of Graphs with Size Constraints: Resolving a Conjecture of {G}uiduli},
  journal = {arXiv preprint},
  year = {2024},
  eprint = {2412.06375},
  archivePrefix = {arXiv},
  primaryClass = {math.CO},
  note = {arXiv:2412.06375}
}

@article{LiuNing2023,
  author = {Liu, Lele and Ning, Bo},
  title = {Unsolved Problems in Spectral Graph Theory},
  journal = {Operations Research Transactions},
  volume = {27},
  number = {4},
  pages = {33--60},
  year = {2023},
  note = {Also available as arXiv:2305.10290}
}

@article{LiuNing2026,
  author = {Liu, Lele and Ning, Bo},
  title = {On Spectral {T}ur{\'a}n Theorems: Confirming a Conjecture of {G}uiduli and Two Problems of {N}ikiforov},
  journal = {arXiv preprint},
  year = {2026},
  eprint = {2605.05048},
  archivePrefix = {arXiv},
  primaryClass = {math.CO},
  note = {arXiv:2605.05048}
}

@article{Wilf1986,
  author = {Wilf, Herbert S.},
  title = {Spectral Bounds for the Clique and Independence Numbers of Graphs},
  journal = {Journal of Combinatorial Theory, Series B},
  volume = {40},
  number = {1},
  pages = {113--117},
  year = {1986},
  doi = {10.1016/0095-8956(86)90069-9}
}

@article{LovaszSzegedy2006,
  author = {Lov{\'a}sz, L{\'a}szl{\'o} and Szegedy, Bal{\'a}zs},
  title = {Limits of Dense Graph Sequences},
  journal = {Journal of Combinatorial Theory, Series B},
  volume = {96},
  number = {6},
  pages = {933--957},
  year = {2006},
  doi = {10.1016/j.jctb.2006.05.002}
}

@article{BorgsChayesCohnZhao2019,
  author = {Borgs, Christian and Chayes, Jennifer T. and Cohn, Henry and Zhao, Yufei},
  title = {An {$L^p$} Theory of Sparse Graph Convergence {I}: Limits, Sparse Random Graph Models, and Power Law Distributions},
  journal = {Transactions of the American Mathematical Society},
  volume = {372},
  number = {5},
  pages = {3019--3062},
  year = {2019},
  doi = {10.1090/tran/7543}
}

@article{BorgsChayesCohnZhao2018,
  author = {Borgs, Christian and Chayes, Jennifer T. and Cohn, Henry and Zhao, Yufei},
  title = {An {$L^p$} Theory of Sparse Graph Convergence {II}: {LD} Convergence, Quotients, and Right Convergence},
  journal = {Annals of Probability},
  volume = {46},
  number = {1},
  pages = {337--396},
  year = {2018},
  doi = {10.1214/17-AOP1187}
}

@article{Razborov2007,
  author = {Razborov, Alexander A.},
  title = {Flag Algebras},
  journal = {Journal of Symbolic Logic},
  volume = {72},
  number = {4},
  pages = {1239--1282},
  year = {2007},
  doi = {10.2178/jsl/1203350785}
}

@article{Razborov2008,
  author = {Razborov, Alexander A.},
  title = {On the Minimal Density of Triangles in Graphs},
  journal = {Combinatorics, Probability and Computing},
  volume = {17},
  number = {4},
  pages = {603--618},
  year = {2008},
  doi = {10.1017/S0963548308009085}
}

@article{Reiher2016,
  author = {Reiher, Christian},
  title = {The Clique Density Theorem},
  journal = {Annals of Mathematics},
  volume = {184},
  number = {3},
  pages = {683--707},
  year = {2016},
  doi = {10.4007/annals.2016.184.3.1}
}

@article{HatamiNorine2011,
  author = {Hatami, Hamed and Norine, Serguei},
  title = {Undecidability of Linear Inequalities in Graph Homomorphism Densities},
  journal = {Journal of the American Mathematical Society},
  volume = {24},
  number = {2},
  pages = {547--565},
  year = {2011},
  doi = {10.1090/S0894-0347-2010-00687-X}
}

@book{Lovasz2012,
  author = {Lov{\'a}sz, L{\'a}szl{\'o}},
  title = {Large Networks and Graph Limits},
  series = {American Mathematical Society Colloquium Publications},
  volume = {60},
  publisher = {American Mathematical Society},
  address = {Providence, RI},
  year = {2012},
  isbn = {978-0-8218-9085-1}
}

@article{Szegedy2011,
  author = {Szegedy, Bal{\'a}zs},
  title = {Limits of Kernel Operators and the Spectral Regularity Lemma},
  journal = {European Journal of Combinatorics},
  volume = {32},
  number = {7},
  pages = {1156--1167},
  year = {2011},
  doi = {10.1016/j.ejc.2011.03.005}
}

@article{BollobasBorgsChayesRiordan2010,
  author = {Bollob{\'a}s, B{\'e}la and Borgs, Christian and Chayes, Jennifer and Riordan, Oliver},
  title = {Percolation on Dense Graph Sequences},
  journal = {Annals of Probability},
  volume = {38},
  number = {1},
  pages = {150--183},
  year = {2010},
  doi = {10.1214/09-AOP478}
}

@article{BollobasJansonRiordan2007,
  author = {Bollob{\'a}s, B{\'e}la and Janson, Svante and Riordan, Oliver},
  title = {The Phase Transition in Inhomogeneous Random Graphs},
  journal = {Random Structures \& Algorithms},
  volume = {31},
  number = {1},
  pages = {3--122},
  year = {2007},
  doi = {10.1002/rsa.20168}
}

@article{Nikiforov2007,
  author = {Nikiforov, Vladimir},
  title = {Bounds on Graph Eigenvalues {II}},
  journal = {Linear Algebra and its Applications},
  volume = {427},
  number = {2--3},
  pages = {183--189},
  year = {2007},
  doi = {10.1016/j.laa.2007.07.010}
}

@article{Nikiforov2010PathsCycles,
  author = {Nikiforov, Vladimir},
  title = {The Spectral Radius of Graphs without Paths and Cycles of Specified Length},
  journal = {Linear Algebra and its Applications},
  volume = {432},
  number = {9},
  pages = {2243--2256},
  year = {2010},
  doi = {10.1016/j.laa.2009.05.023}
}

@mastersthesis{Nosal1970,
  author = {Nosal, Eva},
  title = {Eigenvalues of Graphs},
  school = {University of Calgary},
  year = {1970}
}

@article{Stanley1987,
  author = {Stanley, Richard P.},
  title = {A Bound on the Spectral Radius of Graphs with $e$ Edges},
  journal = {Linear Algebra and its Applications},
  volume = {87},
  pages = {267--269},
  year = {1987},
  doi = {10.1016/0024-3795(87)90172-8}
}

@article{Mantel1907,
  author  = {Mantel, W.},
  title   = {Problem 28},
  journal = {Wiskundige Opgaven met de Oplossingen},
  volume  = {10},
  pages   = {60--61},
  year    = {1907}
}

@article{Turan1941,
  author  = {Tur{\'a}n, Paul},
  title   = {On an extremal problem in graph theory},
  journal = {Mat. Fiz. Lapok},
  volume  = {48},
  pages   = {436--452},
  year    = {1941}
}

@article{TaitTobin2017,
  author = {Tait, Michael and Tobin, Josh},
  title = {Three Conjectures in Extremal Spectral Graph Theory},
  journal = {Journal of Combinatorial Theory, Series B},
  volume = {126},
  pages = {137--161},
  year = {2017},
  doi = {10.1016/j.jctb.2017.04.006}
}

@article{CioabaDesaiTait2024,
  author = {Cioab{\u a}, Sebastian M. and Desai, Dheer Noal and Tait, Michael},
  title = {The Spectral Even Cycle Problem},
  journal = {Combinatorial Theory},
  volume = {4},
  number = {1},
  pages = {Paper No. 10},
  year = {2024},
  eprint = {2205.00990},
  archivePrefix = {arXiv},
  primaryClass = {math.CO}
}

@article{WangKangXue2023,
  author = {Wang, Jing and Kang, Liying and Xue, Yisai},
  title = {On a Conjecture of Spectral Extremal Problems},
  journal = {Journal of Combinatorial Theory, Series B},
  volume = {159},
  pages = {20--41},
  year = {2023},
  doi = {10.1016/j.jctb.2022.11.002}
}

@article{ByrneDesaiTait2024,
  author = {Byrne, John and Desai, Dheer Noal and Tait, Michael},
  title = {A General Theorem in Spectral Extremal Graph Theory},
  journal = {Transactions of the American Mathematical Society},
  note = {To appear; also available as arXiv:2401.07266},
  year = {2025},
  eprint = {2401.07266},
  archivePrefix = {arXiv},
  primaryClass = {math.CO}
}

@article{Liu2024GraphLimitsSpectral,
  author = {Liu, Lele},
  title = {Graph Limits and Spectral Extremal Problems for Graphs},
  journal = {SIAM Journal on Discrete Mathematics},
  volume = {38},
  number = {1},
  pages = {590--608},
  year = {2024},
  doi = {10.1137/22M1508807}
}

@article{BreenRiasanovskyTaitUrschel2022,
  author = {Breen, Jane and Riasanovsky, Alex W. N. and Tait, Michael and Urschel, John},
  title = {Maximum Spread of Graphs and Bipartite Graphs},
  journal = {Communications of the American Mathematical Society},
  volume = {2},
  pages = {417--480},
  year = {2022},
  doi = {10.1090/cams/14}
}

@article{KumarLiuMonterdePragadaTait2026,
  author = {Kumar, Hitesh and Liu, Lele and Monterde, Hermie and Pragada, Shivaramakrishna and Tait, Michael},
  title = {Maximum Spectral Sum of Graphs},
  journal = {arXiv preprint},
  year = {2026},
  eprint = {2604.00512},
  archivePrefix = {arXiv},
  primaryClass = {math.CO}
}

@article{VizueteGarinFrasca2021,
  author = {Vizuete, Renato and Garin, Federica and Frasca, Paolo},
  title = {The Laplacian Spectrum of Large Graphs Sampled From Graphons},
  journal = {IEEE Transactions on Network Science and Engineering},
  volume = {8},
  number = {2},
  pages = {1711--1721},
  year = {2021},
  doi = {10.1109/TNSE.2021.3069675}
}

@inproceedings{GaoCaines2019,
  author = {Gao, Shuang and Caines, Peter E.},
  title = {Spectral Representations of Graphons in Very Large Network Systems Control},
  booktitle = {2019 IEEE 58th Conference on Decision and Control (CDC)},
  pages = {5068--5075},
  year = {2019},
  doi = {10.1109/CDC40024.2019.9030220}
}

@article{Tropp2015,
  author = {Tropp, Joel A.},
  title = {An Introduction to Matrix Concentration Inequalities},
  journal = {Foundations and Trends in Machine Learning},
  volume = {8},
  number = {1-2},
  pages = {1--230},
  year = {2015},
  doi = {10.1561/2200000048},
  eprint = {1501.01571},
  archivePrefix = {arXiv},
  primaryClass = {math.PR}
}
\end{document}